\documentclass[12pt,a4paper,final]{article}
\pdfoutput=1
\usepackage{graphicx}
\graphicspath{{./Figures/}}
\usepackage{subfigure}

\usepackage{amssymb,latexsym}
\usepackage{diagbox}

\usepackage{setspace}

\usepackage{amsmath}

\usepackage{mathrsfs,amsfonts}

\usepackage{amsthm,amsxtra}

\usepackage[final]{showkeys}

\usepackage{stmaryrd}

\usepackage{algorithm}
\usepackage{algorithmicx}
\usepackage{algpseudocode}
\usepackage{mathtools}
\newif\ifPDF
\ifx\pdfoutput\undefined
\PDFfalse
\else
\ifnum\pdfoutput > 0
\PDFtrue
\else
\PDFfalse
\fi
\fi

\ifPDF
\usepackage{pdftricks}
\begin{psinputs}
	\usepackage{pstricks}
	\usepackage{pstcol}
	\usepackage{pst-plot}
	\usepackage{pst-tree}
	\usepackage{pst-eps}
	\usepackage{multido}
	\usepackage{pst-node}
	\usepackage{pst-eps}
\end{psinputs}
\else
\usepackage{pstricks}
\fi

\ifPDF
\usepackage[debug,pdftex,colorlinks=true, 
linkcolor=blue, bookmarksopen=false,
plainpages=false,pdfpagelabels,pdfencoding=auto,psdextra]{hyperref}
\else
\usepackage[dvips]{hyperref}
\fi

\pdfminorversion=6

\usepackage[openbib]{currvita}

\usepackage{fancyhdr}


\newtheorem{theorem}{Theorem}[section]
\newtheorem{lemma}[theorem]{Lemma}
\newtheorem{definition}[theorem]{Definition}

\newtheorem{remark}[theorem]{Remark} 
\newtheorem{corollary}[theorem]{Corollary}

\newtheorem{assumption}[theorem]{Assumption}


\newcommand{\eps}{\varepsilon}





 
 \newcommand{\bbN}{\mathbb N}
 
\newcommand{\bbR}{\mathbb R} 
\newcommand{\bbZ}{\mathbb Z} 


 \newcommand{\bff}{\mathbf f}

 \newcommand{\bp}{\mathbf p}

\newcommand{\cA}{\mathcal A} \newcommand{\cB}{\mathcal B}
  
\newcommand{\cE}{\mathcal E}

 \newcommand{\cL}{\mathcal L}
 
\newcommand{\cO}{\mathcal O}

\newcommand{\cU}{\mathcal U}




\setlength\textwidth{39pc}
\setlength\textheight{54pc}
\setlength\hoffset{0pc}
\setlength\voffset{0pc}
\setlength\topmargin{0pt}
\setlength\headheight{0pt}
\setlength\headsep{50pt}
\setlength\oddsidemargin{0in}
\setlength\evensidemargin{0in}


\newenvironment{keywords}
{\noindent{\bf Key words.}\small}{\par\vspace{1ex}}

\makeatletter
\newcommand{\chapterauthor}[1]{%
	{\parindent0pt\vspace*{-25pt}%
		\linespread{1.1}\large\scshape#1%
		\par\nobreak\vspace*{35pt}}
	\@afterheading%
}
\makeatother
\title{How much can one learn from a single solution of a PDE?}
\author{Hongkai Zhao\and Yimin Zhong}
\date{}
\begin{document}
\maketitle
\begin{abstract}
Linear evolution PDE $\partial_t u(x,t) = -\mathcal{L} u$, where $\mathcal{L}$ is a strongly elliptic operator independent of time, is studied as an example to show if one can superpose snapshots of a single (or a finite number of) solution(s) to construct an arbitrary solution. Our study shows that it depends on the growth rate of the eigenvalues, $\mu_n$, of $\mathcal{L}$ in terms of $n$. When the statement is true, a simple data-driven approach for model reduction and approximation of an arbitrary solution of a PDE without knowing the underlying PDE is designed.  Numerical experiments are presented to corroborate our analysis.

\end{abstract}
\begin{keywords}
Partial differential equation (PDE), elliptic operator, exponential moment problem, singular value decomposition (SVD)
\end{keywords}
\section{Introduction}
Partial differential equations (PDE) play an important role in modeling, studying, and predicting complicated dynamics in science, engineering and related fields as well as many real-world applications such as heat convection/diffusion, fluid flow, weather forecast, climate change, etc. A typical procedure is to derive a PDE model, e.g., using physical laws or assumptions, then study the model and develop numerical algorithms for computer simulation. With the advance of technologies, abundant data are available from measurements and observations in many situations where the underlying model is not yet known or not accurate enough, a natural question is whether one can learn or predict the dynamic using data driven approach, which prompt recent increasing interest in PDE learning from its solution data. 


A basic and essential question for PDE learning is how much solution data is needed and how much data is available realistically. A common issue in most of the previous PDE learning approaches is the assumption of availability of any solution data and as much as one would like. In many real applications, one only has the chance to observe a phenomenon and its dynamics once it happens for some time duration, i.e, a single solution $u(x,t), 0<t \le T$. Even if the event happens again, the environment and setup is different. In our earlier work~\cite{he2022much}, we characterized the solution data space, the dimension of the space spanned by all snapshots in time of a solution  with certain tolerance, and showed how it is affected by the underlying PDE and the initial data for an evolution PDE $\partial_t u(x,t) = -\mathcal{L} u$. In particular, it was shown that the data space is small if $\cL$ is strongly elliptic, which implies limited data from a single solution trajectory and the challenge for PDE learning in practice. It was also shown that the space spanned by all possible solution is small after $t>0$. 

In this work, we study if the space of a single (or a finite number of) solution(s) is as rich as the space of all solutions, or in other words, can one use a superposition of snapshots of a single (or a finite number of) solution(s) to construct an arbitrary solution. When the answer is positive, one can use data driven approach for model reduction and solution approximation bypassing explicit learning or approximation of the PDE operator.
In our study, the following evolution PDE on a compact domain $\Omega \subset \mathbb{R}^d$ satisfying certain homogeneous boundary condition,
\begin{equation}\label{EQ: EVOL}
\begin{aligned}
        \partial_t u(x,t) &= -\mathcal{L} u(x,t),\quad && x\in \Omega \subset \mathbb{R}^d,\;0<t\le T \le \infty,\\ 
        \cB u(x,t)&=0, \quad &&x\in \partial \Omega, \\
        u(x, 0) &= f(x)\quad &&x\in  \Omega,
\end{aligned}
\end{equation}
where $\cL = \sum_{0\le |\alpha|\le N_{\cL}} p_{\alpha}(x) \partial^{\alpha}$ is a  self-adjoint strongly elliptic differential operator of order $N_{\cL}$ with coefficients $p_{\alpha}\in C^{|\alpha| + N_{\cL}/2}(\overline{\Omega})$ and the boundary $\partial\Omega$ is smooth (if not empty) and $\cB u = 0$ denotes the Dirichlet boundary condition $\partial^{\alpha} u (x, t) = 0$ for $x\in\partial\Omega$, $|\alpha|\le \frac{N_{\cL}}{2} - 1$, then $\cL$ is self-adjoint~\cite{berezanskiui1968expansions}.

The rest of the paper is organized as follows. We first present in Section 2 some preliminaries and formulate the problem into a  moment problem. We then consider in Section 3 the cases where eigenvalues $\mu_n$ of $\cL$ are simple and grows super-linearly or (sub-)linearly in terms of $n$, which gives different statements. In Section 4 we study the problem when eigenvalues has finite multiplicities. We then present numerical experiments in Section 5 to verify our analysis and provide an application of data driven approach for solving PDEs without knowing the underlying PDE. We finally give a conclusion in Section 6.
\section{Preliminaries}
 Consider the PDE~\eqref{EQ: EVOL}, for simplicity, we assume the eigenvalues $0 < \mu_{1} < \mu_2 < \dots $ of the self-adjoint strongly elliptic differential operator $\cL$ are positive and distinct and denote $\phi_1(x), \phi_2(x), \dots$ to be the corresponding eigenfunctions which form a complete orthonormal basis in $L^2(\Omega)$. Let $u(x,t)$ be a sample solution which can be represented as
\begin{equation}
    u(x, t) = \sum_{n=1}^{\infty} e^{-\mu_n t} c_n \phi_n(x),  
\end{equation}
where $u(x,0)=\sum_{n=1}^{\infty} c_n \phi_n(x),~ c_n\neq 0$. Given an initial condition $f(x) = \sum_{n=1}^{\infty} f_n \phi_n(x)\in L^2(\Omega)$, we can express the solution at time $\tau$ by 
\begin{equation}
    w(x, \tau) = \sum_{n=1}^{\infty} e^{-\mu_n \tau} f_n \phi_n(x).
\end{equation}
In order to express the solution $w(x,\tau)$ at $\tau>0$ as a superposition of snapshots of the sample solution, we introduce the interpolation or weight function $\nu(t; \tau) \in L^{2}([0, T] )$ and solve the following equation
\begin{equation}
     w(x, \tau)  = \int_0^{T} u(x, t) \nu(t;\tau) dt,
\end{equation}
which is equivalent to solving the exponential moment problem
\begin{equation}\label{EQ: MOMENT}
\int_0^{T} e^{-\mu_n t} \nu(t;\tau) dt = m_n :=   e^{-\mu_n \tau} f_n/ c_n.
\end{equation}

\begin{definition}
The functions $\{\varphi_n\}_{n\ge 1}$ are bi-orthogonal with respect to $\{e^{-\mu_n t}\}_{n\ge 1}$ in $[0,T]$ if 
\begin{equation}
    \int_0^T e^{-\mu_n t} \varphi_k(t) dt = \delta_{kn},
\end{equation}
where $\delta_{kn}$ is the Kronecker delta.
\end{definition}
Suppose the bi-orthogonal functions $\{\varphi_n\}_{n\ge 1}$ exist, then a formal expansion of $\nu(t)$ can be written as
\begin{equation}\label{EQ: NU SERIES}
    \nu(t) = \sum_{n=1}^{\infty} m_n \varphi_n(t).
\end{equation}
However, the exact characterization of the convergence behavior of the series~\eqref{EQ: NU SERIES} is a difficult task. Instead, we will study the absolute convergence in $L^2[0, T]$, which means  
\begin{equation}
    \sum_{n=1}^{\infty} |m_n|\cdot  \|\varphi_n\|_{L^2[0, T]} < \infty.
\end{equation}
If the series indeed converges absolutely, then the series converges in $L^2[0, T]$. The existence of $\nu$ implies $w(x,\tau)$ can be linearly represented by the sample solution $u(x, t)$ on $[0, T]$. Clearly, it only remains to estimate the norms $\|\varphi_n\|_{L^2[0, T]}$. 
\begin{remark}
When there exist non-positive eigenvalues, one can find a constant $\overline{\mu}$ that $\mu_1 + \overline{\mu} > 0$, then the interpolation function can be changed to $\nu(t) e^{\overline{\mu} t}$ instead.  Hence in the rest of this paper, we only focus on the case that all eigenvalues are positive.
\end{remark}
\section{Solution of the moment problem}
\subsection{Convergent series}\label{SEC: CONV}
Let $\cE_T$ be the smallest closed subspace of $L^2[0, T]$ containing the functions $e^{-\mu_n t}$, $n=1, 2, \dots$. It is known~\cite{fattorini1971exact} that $\cE_T$ is a proper subspace of $L^2[0, T]$ if and only if 
\begin{equation}\label{EQ: SERIES}
    \sum_{n=1}^{\infty} \mu_n^{-1} < \infty.
\end{equation}
In the case that~\eqref{EQ: SERIES} is convergent, we define $\cE_T^n$ by the smallest closed subspace of $L^2[0, T]$ containing all $e^{-\mu_k t}$ for $k\neq n$, then $e^{-\mu_n t}\notin \cE_T^n$, there exists a unique element $r_n\in \cE_T^n$ such that minimizes $d_n(T):=\| e^{-\mu_n t}-r_n\|_{L^2[0, T]}$, then the optimal bi-orthogonal function $\varphi_n$ is chosen by
\begin{equation}
    \varphi_n = \frac{e^{-\mu_n t} - r_n}{d_n(T)^2}
\end{equation}
and its norm $\|\varphi_n\|_{L^2[0, T]} = d_n(T)^{-1}$. If $T=\infty$, the computation of $d_n(T)$ is fairly simple~\cite{fattorini1971exact}
\begin{equation}
    d_n(\infty) = \sqrt{\frac{2}{\mu_n}} \frac{\left|\prod_{j\neq n} \left(1 - \frac{\mu_n}{\mu_j}\right)\right|}{\prod_{j\ge 1} \left(1 + \frac{\mu_n}{\mu_j}\right)},
\end{equation}
while for a finite $T$, there exists a constant $\kappa(T) > 0$ such that $d_{n}(T) \ge \kappa(T)^{-1} d_n(\infty)$~\cite{schwartz1943etude}, hence 
\begin{equation}\label{EQ: NORM}
    \|\varphi_n\|_{L^2[0, T]} \le \kappa(T)\sqrt{\frac{\mu_n}{2}}\frac{\prod_{j\ge 1} \left(1 + \frac{\mu_n}{\mu_j}\right)}{\left|\prod_{j\neq n} \left(1 - \frac{\mu_n}{\mu_j}\right)\right|}.
\end{equation}
We estimate $\|\varphi_n\|_{L^2[0, T]}$ under the following two assumptions.
\begin{assumption}\label{AS: 1}
For certain $\sigma\in (0, 1]$ and $\beta > 1$, $\mu_n = M n^{\beta}(1 + o(n^{-\sigma}))$.
\end{assumption}
\begin{assumption}\label{AS: 2}
For certain $\theta > 0$ and $s\ge 0$ such that for each $n\ge 1$, $\mu_{n+1} - \mu_{n} \ge \theta n^{-s}$.
\end{assumption}
Under suitable conditions, the first assumption holds true for the strongly elliptic operators~\cite{zielinski1998asymptotic, safarov1997asymptotic, beals1970asymptotic, agmon1968asymptotic,hormander1968riesz}, where the growth rate is $\beta = N_{\cL}/ d > 1$ when the space dimension is less than the order of the elliptic operator.  The exponent $\sigma$ can be made explicit in certain cases, see Theorem C in~\cite{beals1970asymptotic} and Theorem 3.1 in ~\cite{agmon1968asymptotic}. The second assumption is to prevent the blowing-up of $\|\varphi_n\|_{L^2[0, T]}$. When the eigenvalues are multiple of integers, i.e., $\mu_n=Mn^{\beta}$, this is guaranteed. However in general there is no known estimate for $\mu_{n+1} - \mu_n$ except the spectral gap. For specific cases (see Section~\ref{SEC: EX 3}), it is possible to claim the desired lower bound. Particularly, the one dimensional Sturm-Liouville operators with Dirichlet boundary conditions satisfy both of the assumptions.

In the following, we introduce a lemma for the estimates of the products appearing in~\eqref{EQ: NORM}. The proof is included in the appendix~\ref{AP: LEM} and the main idea follows~\cite{fattorini1971exact}.

\begin{lemma}~\label{LEM: EST}
The following estimates hold.
\begin{equation}\label{EQ: PROD}
    \begin{aligned}
    \prod_{j\ge 1} \left(1 + \frac{\mu_n}{\mu_j}\right) &= \exp \left(  M^{-1/\beta} \mu_n^{1/\beta}  (\zeta_{0, \beta} + o(1)) \right), \\ 
    \left|\prod_{j\neq n}\left(1 - \frac{\mu_n}{\mu_j}\right)\right| &\ge \exp(-K_0 \mu_n^{1/\beta} M^{-1/\beta} (\log \mu_n + K_1) ),
    \end{aligned}
\end{equation}
where $\zeta_{0, \beta}:= \int_0^{\infty} \frac{dy}{y^{1 - 1/\beta}(1+y)}$ and $K_0$, $K_1$ are  positive constants independent of $n$.
\end{lemma}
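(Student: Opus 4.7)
The plan is to take logarithms in both estimates and approximate the resulting sums by integrals via Riemann-sum arguments, using Assumption~\ref{AS: 1} to replace $\mu_n/\mu_j$ by $(n/j)^{\beta}$ with controlled error and, for the second estimate, Assumption~\ref{AS: 2} to handle the near-diagonal singularity $j\approx n$.

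For the first product I would set $S_1(n) := \sum_{j\ge 1}\log(1+\mu_n/\mu_j)$. Substituting $\mu_j = Mj^{\beta}(1+o(j^{-\sigma}))$ reduces the leading contribution to $\sum_{j\ge 1}\log(1+(n/j)^{\beta})$, which I would interpret as a Riemann sum in the variable $y=j/n$ with step $1/n$. Monotonicity of $y\mapsto\log(1+y^{-\beta})$ and its integrability on $(0,\infty)$ (valid precisely because $\beta>1$) give
\begin{equation*}
\sum_{j\ge 1}\log\!\left(1+(n/j)^{\beta}\right) \;=\; n\int_0^{\infty}\log(1+y^{-\beta})\,dy \;+\; O(\log n).
\end{equation*}
To evaluate the integral I would substitute $t=y^{\beta}$ and integrate by parts; the boundary terms $t^{1/\beta}\log(1+1/t)$ vanish at both $0$ and $\infty$ precisely because $\beta>1$, leaving $\int_0^{\infty}t^{1/\beta-1}/(1+t)\,dt = \zeta_{0,\beta}$. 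Combining with $n = M^{-1/\beta}\mu_n^{1/\beta}(1+o(1))$ and checking that the perturbation coming from the $o(j^{-\sigma})$ correction in $\mu_j$ contributes at most $O(n^{1-\sigma}\log n) = o(n)$ after summation yields the claimed exponential form.

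For the second product, set $S_2(n) := \sum_{j\neq n}\log|1-\mu_n/\mu_j|$ and split the sum into a far part ($j\le n/2$ or $j\ge 2n$) and a near part ($n/2 < j < 2n$, $j\neq n$). On the far part, $|1-\mu_n/\mu_j|$ stays in a bounded multiplicative range after the $o(j^{-\sigma})$ corrections are absorbed, and a Riemann-sum argument parallel to the one above shows the contribution is $\ge -O(n)$; indeed, the negative piece $\sum_{j\ge 2n}\log(1-\mu_n/\mu_j)\sim -\sum_{j\ge 2n}(n/j)^{\beta}$ is comparable to $-n\int_2^{\infty}y^{-\beta}\,dy=O(n)$. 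The near part is the main obstacle, since the $o(j^{-\sigma})$ error in Assumption~\ref{AS: 1} is too coarse to control $\mu_n-\mu_j$ when $|j-n|$ is small; here I would invoke Assumption~\ref{AS: 2} telescopically to obtain $|\mu_j-\mu_n|\ge \theta|j-n|(2n)^{-s}$ throughout the near range, so that for $j=n\pm k$ with $1\le k\le n/2$,
\begin{equation*}
\log|1-\mu_n/\mu_j|\;\ge\;\log k - s\log(2n) + \log\theta - \log\mu_j\;\ge\;\log k - C\log n
\end{equation*}
for a constant $C$ absorbing $\beta$, $s$, and $\log\theta$; summing over both signs of $k$ and invoking Stirling yields a near contribution bounded below by $2\log((n/2)!)-O(n\log n) = -O(n\log n)$. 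Since $n\log n = O(M^{-1/\beta}\mu_n^{1/\beta}\log\mu_n)$, the desired bound with uniform constants $K_0,K_1$ follows. The main bookkeeping challenge is keeping these constants independent of $n$ across the two regimes, which is straightforward once the constants in Assumptions~\ref{AS: 1} and~\ref{AS: 2} and in the Riemann-sum error are made explicit.
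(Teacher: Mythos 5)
Your proposal is correct, but it follows a genuinely different route from the paper's. The paper works with the eigenvalue counting function $N(x)$ and writes each log-product as a Riemann--Stieltjes integral $\int \log(1\pm \mu_n/x)\,dN(x)$; integrating by parts produces $\mu_n\int N(x)\,dx/(x(x\pm\mu_n))$, and the asymptotics from Assumption~\ref{AS: 1} enter through the single uniform bound $0\le M^{-1/\beta}x^{1/\beta}+\delta(x)-N(x)\le 1$, which centralizes all the error control. For the second product the paper splits the Stieltjes integral at $\mu_n$; integration by parts then produces boundary terms involving $\log(\mu_n-\mu_{n-1})$ and $\log(\mu_{n+1}-\mu_n)$, and these are where Assumption~\ref{AS: 2} is invoked. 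You instead stay at the level of sums: you approximate $\sum_j\log(1+(n/j)^\beta)$ directly by a Riemann sum in $y=j/n$ (valid with $O(\log n)$ error since $\log(1+y^{-\beta})$ is decreasing and integrable precisely when $\beta>1$), and for the second product you split near/far at $j=n/2$ and $j=2n$, control the far range by the same Riemann-sum comparison, and handle the near range by telescoping Assumption~\ref{AS: 2} to get $|\mu_j-\mu_n|\ge\theta|j-n|(2n)^{-s}$ and then summing $\log|j-n|$ via Stirling. Both routes arrive at the same $n\log n$ (equivalently $\mu_n^{1/\beta}\log\mu_n$) bound for the second product and the same $n\,\zeta_{0,\beta}$ leading term for the first. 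The paper's Stieltjes formulation is more compact and makes the error from Assumption~\ref{AS: 1} appear in one place; your approach is more elementary and arguably makes the role of the spectral gap from Assumption~\ref{AS: 2} more transparent, since it is used pointwise on the near-diagonal indices rather than only on the two nearest neighbors $\mu_{n\pm1}$. One small caution: your far/near cutoff at $j=n/2, 2n$ needs the asymptotic $\mu_j\asymp j^\beta$ to be effective uniformly in that range, which is fine for $n$ large but requires absorbing finitely many small $n$ into the constants $K_0,K_1$ -- harmless but worth a sentence.
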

\begin{lemma}\label{THM: BI-ORTH}
The bi-orthogonal set $\{ \varphi_n \}_{n\ge 1}$ satisfies the following bound
\begin{equation}
    \|\varphi_n\|_{L^2[0, T]} \le \sqrt{ \frac{\mu_n}{2}} \kappa(T) \exp \left( K_0 M^{-1/\beta} \mu_n^{1/\beta} (\log \mu_n + K_2) \right),
\end{equation}
where $K_2$ is a positive constant independent of $n$.
\end{lemma}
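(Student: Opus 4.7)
The plan is to combine the bound \eqref{EQ: NORM} with the two product estimates provided by Lemma~\ref{LEM: EST} and then consolidate the resulting expression into a single exponential. Starting from
\[
\|\varphi_n\|_{L^2[0,T]} \le \kappa(T)\sqrt{\tfrac{\mu_n}{2}}\,\frac{\prod_{j\ge 1}\!\left(1+\tfrac{\mu_n}{\mu_j}\right)}{\left|\prod_{j\neq n}\!\left(1-\tfrac{\mu_n}{\mu_j}\right)\right|},
\]
I would substitute the upper bound on the numerator and the lower bound on the denominator from \eqref{EQ: PROD}, which immediately produces a single exponential factor whose exponent is the sum of $M^{-1/\beta}\mu_n^{1/\beta}(\zeta_{0,\beta}+o(1))$ and $K_0 M^{-1/\beta}\mu_n^{1/\beta}(\log \mu_n + K_1)$.

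Next, I would absorb the linear-in-$\mu_n^{1/\beta}$ contributions into the dominant $\mu_n^{1/\beta}\log \mu_n$ term. Concretely, the exponent factors as
\[
K_0 M^{-1/\beta}\mu_n^{1/\beta}\!\left(\log \mu_n + K_1 + \tfrac{\zeta_{0,\beta}+o(1)}{K_0}\right),
\]
so defining $K_2 := K_1 + \zeta_{0,\beta}/K_0 + 1$ (or any fixed upper bound of the parenthesized constant, inflated to swallow the $o(1)$) yields the stated form. The tail indices $n$ where the $o(1)$ has not yet stabilized are handled by enlarging $K_2$ further, using the fact that only finitely many indices are involved and each $\|\varphi_n\|_{L^2[0,T]}$ is finite. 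Since the claim only asserts existence of some constant $K_2$ independent of $n$, no sharp control is needed.

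The only real bookkeeping step, and hence the main (mild) obstacle, is ensuring that all constants that are swept into $K_2$ are genuinely independent of $n$ and that the $o(1)$ term from the numerator estimate is controlled uniformly. This is straightforward because both factors in Lemma~\ref{LEM: EST} depend on $n$ only through $\mu_n$, and $\log \mu_n \to \infty$ so the additive constants are eventually dominated. No sharper cancellation between the numerator growth $\exp(\zeta_{0,\beta} M^{-1/\beta}\mu_n^{1/\beta})$ and the denominator decay is sought — we simply accept the combined $\exp(O(\mu_n^{1/\beta}\log \mu_n))$ bound, which is what the statement requires.
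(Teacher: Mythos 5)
Your proposal follows exactly the paper's argument: plug the two product estimates of Lemma~\ref{LEM: EST} into the bound~\eqref{EQ: NORM}, factor out $K_0 M^{-1/\beta}\mu_n^{1/\beta}$ from the resulting exponent, and absorb $K_1 + (\zeta_{0,\beta}+o(1))/K_0$ into a single constant $K_2$. Your extra remark about enlarging $K_2$ to cover the finitely many indices where the $o(1)$ has not stabilized is a correct (and slightly more careful) way of justifying the absorption that the paper states more tersely.
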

\begin{proof}
It is known that
\begin{equation}
\begin{aligned}
     \|\varphi_n\|_{L^2[0, T]} &\le \sqrt{\frac{\mu_n}{2}} \kappa(T) \frac{\prod_{j\ge 1} \left(1 + \frac{\mu_n}{\mu_j}\right)}{\left|\prod_{j\neq n}\left(1 - \frac{\mu_n}{\mu_j}\right) \right|} \\ &\le \sqrt{\frac{\mu_n}{2}}   \kappa(T) \exp \left(  M^{-1/\beta} \mu_n^{1/\beta} \left[ (\zeta_{0, \beta} + o(1)) +K_0 (\log \mu_n + K_1)\right]  \right) \\
     &\le\sqrt{\frac{\mu_n}{2}}  \kappa(T) \exp \left(  K_0M^{-1/\beta} \mu_n^{1/\beta}  (\log \mu_n + K_2) \right),
\end{aligned}
\end{equation}
where the constant $K_2 \ge K_1 + ( \zeta_{0,\beta} + o(1) ) / K_0$.
\end{proof}
Using the above estimate of $\|\varphi_n\|_{L^2[0,T]}$, we immediately obtain the following theorem to characterize the absolute convergence of $\nu$.
\begin{theorem}\label{THM: MAIN}
The exponential moment problem has an absolutely convergent solution in $L^2[0, T]$ if 
\begin{equation}
    \sum_{n=1}^{\infty} |m_n| \sqrt{\mu_n} \exp \left( K_0 M^{-1/\beta} \mu_n^{1/\beta} (\log \mu_n + K_2 ) \right) < \infty
\end{equation}
\end{theorem}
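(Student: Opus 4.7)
The plan is to invoke the bi-orthogonal expansion \eqref{EQ: NU SERIES} directly and use the norm bound from Lemma \ref{THM: BI-ORTH} to convert absolute $L^2$-convergence of the series into the hypothesis of the theorem. Concretely, I would define
\[
\nu(t;\tau) := \sum_{n=1}^{\infty} m_n \varphi_n(t),
\]
and first show this series converges absolutely in $L^2[0,T]$, then verify that the limit actually solves the moment problem \eqref{EQ: MOMENT}.

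For the first step, I would simply apply the triangle inequality for the $L^2$-norm together with Lemma \ref{THM: BI-ORTH}:
\[
\sum_{n=1}^{\infty} |m_n|\,\|\varphi_n\|_{L^2[0,T]} \le \frac{\kappa(T)}{\sqrt{2}} \sum_{n=1}^{\infty} |m_n|\sqrt{\mu_n}\,\exp\!\left(K_0 M^{-1/\beta}\mu_n^{1/\beta}(\log\mu_n + K_2)\right).
\]
Under the hypothesis of the theorem this last sum is finite, so the partial sums $\nu_N(t) := \sum_{n=1}^N m_n \varphi_n(t)$ form a Cauchy sequence in $L^2[0,T]$, and therefore converge to some $\nu \in L^2[0,T]$. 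Absolute convergence in $L^2$ is exactly the property the paper declared it would work with, so no further regularity is needed.

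For the second step, I would use the bi-orthogonality relation together with the fact that the linear functionals $g \mapsto \int_0^T e^{-\mu_k t} g(t)\, dt$ are continuous on $L^2[0,T]$ (Cauchy--Schwarz, since $e^{-\mu_k t} \in L^2[0,T]$). Passing the limit $N \to \infty$ through the integral gives
\[
\int_0^T e^{-\mu_k t}\nu(t)\,dt = \lim_{N\to\infty}\sum_{n=1}^N m_n \int_0^T e^{-\mu_k t}\varphi_n(t)\,dt = m_k,
\]
which is precisely \eqref{EQ: MOMENT}. Multiplying by $c_k \phi_k(x)$ and summing over $k$ then recovers $w(x,\tau) = \int_0^T u(x,t)\nu(t;\tau)\,dt$.

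There is no substantial obstacle here: all hard work has been absorbed into the estimate \eqref{EQ: NORM} and Lemmas \ref{LEM: EST}--\ref{THM: BI-ORTH}. The only delicate point worth stating carefully is the justification for exchanging sum and integral in the second step; this is entirely routine via continuity of the linear functional and the already-established $L^2$-convergence, so the argument amounts to a clean assembly of previously established ingredients.
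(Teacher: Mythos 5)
Your proof is correct and follows the same route the paper intends: the paper merely gestures at the argument with ``we immediately obtain,'' relying on Lemma~\ref{THM: BI-ORTH} and the triangle inequality exactly as you do. Your extra step verifying that the $L^2$-limit actually solves the moment problem, via continuity of the functionals $g \mapsto \int_0^T e^{-\mu_k t}g(t)\,dt$ together with bi-orthogonality, is a sensible and correct detail that the paper leaves implicit.
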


\begin{corollary}\label{COR: 5}
If $\tau > 0$, then the moment problem with $m_n = e^{-\mu_n \tau} \frac{f_n}{c_n}$ has a solution in $L^2[0, T]$ if 
\begin{equation}
    \sum_{n=1}^{\infty} \left|\frac{f_n}{c_n}\right|^2 \exp(-2 \mu_n \tau_0) < \infty.
\end{equation}
for certain $\tau_0 \in [0, \tau)$. 
\end{corollary}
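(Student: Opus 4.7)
The plan is to deduce Corollary \ref{COR: 5} directly from Theorem \ref{THM: MAIN} by substituting the specific form $m_n = e^{-\mu_n \tau} f_n/c_n$ and splitting the exponential $e^{-\mu_n \tau}$ into two pieces. First I would write $e^{-\mu_n \tau} = e^{-\mu_n \tau_0}\, e^{-\mu_n(\tau - \tau_0)}$ for the $\tau_0 \in [0,\tau)$ provided by the hypothesis, so that the series appearing in Theorem \ref{THM: MAIN} becomes
\begin{equation*}
\sum_{n=1}^{\infty} \left|\tfrac{f_n}{c_n}\right| e^{-\mu_n \tau_0} \cdot e^{-\mu_n(\tau - \tau_0)} \sqrt{\mu_n}\, \exp\!\left(K_0 M^{-1/\beta}\mu_n^{1/\beta}(\log \mu_n + K_2)\right).
\end{equation*}

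Next I would apply the Cauchy--Schwarz inequality, pairing $|f_n/c_n|\, e^{-\mu_n \tau_0}$ with the remainder. The first factor is then bounded by the square root of $\sum_n |f_n/c_n|^2 e^{-2\mu_n \tau_0}$, which is finite by assumption. The second factor becomes
\begin{equation*}
\left( \sum_{n=1}^{\infty} \mu_n\, e^{-2\mu_n(\tau - \tau_0)} \exp\!\left(2K_0 M^{-1/\beta} \mu_n^{1/\beta}(\log \mu_n + K_2)\right) \right)^{1/2},
\end{equation*}
and the task reduces to showing this tail series is convergent.

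The main (and essentially only) obstacle is verifying that this tail sum converges, but this follows from Assumption \ref{AS: 1}, which gives $\beta > 1$. Indeed, since $\mu_n^{1/\beta} \log \mu_n = o(\mu_n)$ as $n \to \infty$, for any fixed $\delta = \tau - \tau_0 > 0$ the exponent
\begin{equation*}
-2\mu_n \delta + 2K_0 M^{-1/\beta} \mu_n^{1/\beta}(\log \mu_n + K_2)
\end{equation*}
is eventually dominated by $-\mu_n \delta$, so the summand decays faster than $\mu_n\, e^{-\mu_n \delta}$. Combined with the polynomial growth $\mu_n \sim M n^\beta$, this yields rapid convergence and completes the verification of the hypothesis of Theorem \ref{THM: MAIN}, hence producing an absolutely convergent solution $\nu \in L^2[0,T]$.
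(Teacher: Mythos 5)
Your argument is correct and follows essentially the same route as the paper: invoke Theorem~\ref{THM: MAIN}, factor $e^{-\mu_n\tau}=e^{-\mu_n\tau_0}e^{-\mu_n(\tau-\tau_0)}$, apply Cauchy--Schwarz with exactly this pairing, and then observe that since $\beta>1$ the term $\mu_n^{1/\beta}\log\mu_n$ is $o(\mu_n)$, so the tail factor converges. The paper phrases the final step as the explicit eventual inequality $-2\mu_n(\tau-\tau_0)+2K_0M^{-1/\beta}\mu_n^{1/\beta}(\log\mu_n+K_2)+\log\mu_n<-\mu_n(\tau-\tau_0)$, but this is the same observation you make.
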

\begin{proof}
From the Theorem~\ref{THM: MAIN}, we just need to show
\begin{equation}
    \sum_{n=1}^{\infty} \frac{f_n}{c_n} \sqrt{\mu_n} \exp\left(-\mu_n \tau + K_0 M^{-1/\beta} \mu_n^{1/\beta} (\log \mu_n + K_2 ) \right) < \infty.
\end{equation}
By the Cauchy-Schwartz inequality,
\begin{equation}
\begin{aligned}
  &\left[\sum_{n=1}^{\infty} \frac{f_n}{c_n} \sqrt{\mu_n} \exp\left(-\mu_n \tau + K_0 M^{-1/\beta} \mu_n^{1/\beta} (\log \mu_n + K_2 ) \right) \right]^2\\
  &\le   \sum_{n=1}^{\infty} \left|\frac{f_n}{c_n}\right|^2 \exp(-2\mu_n \tau_0)\sum_{n=1}^{\infty} \mu_n\exp\left(-2\mu_n (\tau-\tau_0) + 2K_0 M^{-1/\beta} \mu_n^{1/\beta} (\log \mu_n + K_2 ) \right) 
\end{aligned}
\end{equation}
Our conclusion is immediately proved by noticing that when $n$ is sufficiently large,
\begin{equation}
    -2\mu_n (\tau-\tau_0) + 2K_0 M^{-1/\beta} \mu_n^{1/\beta} (\log \mu_n + K_2 ) + \log \mu_n < - \mu_n (\tau- \tau_0).
\end{equation}
\end{proof}

\begin{theorem}\label{THM: MAIN 2}
Suppose $g(x)$ satisfies that
\begin{equation}
    g(x) = \sum_{n=1}^{\infty} c_n \phi_n(x),
\end{equation}
where $|c_n|\ge C e^{-pn^{\alpha}}$ for certain constants $C > 0$, $p \ge 0$ and $\alpha \in [0, \beta)$, $n=1,2,\dots$ and 
 $f\in L^2(\Omega)$.
Let $u(x, t)$ and $w(x,t)$ be the solutions with respect to initial conditions $g(x)$ and $f(x)$ respectively. $\forall \tau> t_0$, there exists an interpolation function $\nu(t; \tau, [t_0, t_1])\in L^2[t_0, t_1]$ such that 
\begin{equation}
    w(x, \tau) = \int_{t_0}^{t_1} u(x, t) \nu(t;\tau) dt,
\end{equation}
and $\|\nu(\cdot\,;\tau)\|_{L^2[t_0, t_1]}\le \frac{1}{\sqrt{2}C} \kappa(t_1 - t_0) \|f\|_{L^2(\Omega)} \cdot  (\tau - t_0)^{-\omega(\tau - t_0)/2}  $ with
\begin{equation}
    \omega(t) = 1 + \varpi t^{-\frac{\max(\alpha, 1)}{(\beta - \max(\alpha, 1) - \delta)}},
\end{equation}
where $\delta\in(0, \beta-\max(\alpha,1))$ is arbitrary and $\varpi$ is a constant independent of $t$.
\end{theorem}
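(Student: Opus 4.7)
The plan is to reduce the problem to the moment-problem framework of Section~\ref{SEC: CONV} and then carry out a careful asymptotic analysis of a sum involving two competing exponentials. After the time shift $s = t - t_0$, the integral identity $w(x,\tau) = \int_{t_0}^{t_1} u(x,t)\nu(t;\tau)\,dt$ becomes the moment problem~\eqref{EQ: MOMENT} on $[0, T]$ with $T = t_1 - t_0$ and data $m_n = e^{-\mu_n(\tau - t_0)} f_n/c_n$. Expanding $\tilde{\nu}(s) := \nu(s + t_0;\tau) = \sum_n m_n \varphi_n(s)$ in the biorthogonal basis, the triangle inequality together with Lemma~\ref{THM: BI-ORTH} and the lower bound $|c_n| \ge C e^{-pn^{\alpha}}$ yields
\begin{equation*}
\|\tilde{\nu}\|_{L^2[0, T]} \le \frac{\kappa(T)}{C\sqrt{2}}\sum_n |f_n|\sqrt{\mu_n}\exp\bigl(-\mu_n(\tau-t_0) + pn^{\alpha} + K_0 M^{-1/\beta}\mu_n^{1/\beta}(\log\mu_n + K_2)\bigr),
\end{equation*}
and a Cauchy--Schwarz step factors out $\|f\|_{L^2(\Omega)}$. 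It remains to bound the sum $S(t) := \sum_n \mu_n\exp\bigl(-2\mu_n t + 2pn^{\alpha} + 2K_0 M^{-1/\beta}\mu_n^{1/\beta}(\log\mu_n + K_2)\bigr)$ with $t := \tau - t_0$ by a quantity of the shape $t^{-\omega(t)}$.

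Set $\gamma := \max(\alpha, 1)$ and introduce an auxiliary $\delta_0 \in (0, \delta)$, to be fixed later. Using Assumption~\ref{AS: 1} to write $n^{\alpha} = O(\mu_n^{\alpha/\beta})$ and the elementary estimate $\log \mu_n = O(\mu_n^{\delta_0/\beta})$, the positive part of the exponent is at most $C_*\mu_n^{\rho}$ for large $n$, with $\rho := (\gamma+\delta_0)/\beta < 1$. The one-variable function $F(\mu) := -2\mu t + C_*\mu^{\rho}$ is maximized at $\mu^* \sim t^{-1/(1-\rho)}$ with peak value $F(\mu^*) \sim t^{-R}$, where $R := \rho/(1-\rho) = (\gamma+\delta_0)/(\beta-\gamma-\delta_0)$. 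I would then split the series at $\mu^*$: for indices with $\mu_n$ large enough that $C_*\mu_n^{\rho} \le t\mu_n$, the bound $F(\mu_n) \le -t\mu_n$ gives a tail of order $O(t^{-1-1/\beta})$ by the standard heat-kernel sum, while the remaining head contains at most $O(t^{-1/(\beta-\gamma-\delta_0)})$ indices, each contributing at most $\mu^* \exp(F(\mu^*))$. Combining,
\begin{equation*}
S(t) \le K_3\, t^{-(\beta+1)/(\beta-\gamma-\delta_0)} \exp\bigl(K_4\,t^{-(\gamma+\delta_0)/(\beta-\gamma-\delta_0)}\bigr) + O(t^{-1-1/\beta}).
\end{equation*}

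Finally, match this with $t^{-\omega(t)} = t^{-1}\exp\bigl(\varpi |\log t|\, t^{-\gamma/(\beta-\gamma-\delta)}\bigr)$. A direct calculation shows that the inequality $(\gamma+\delta_0)/(\beta-\gamma-\delta_0) < \gamma/(\beta-\gamma-\delta)$ holds precisely when $\delta_0 < \gamma\delta/(\beta-\delta)$, and this window is nonempty by the hypothesis $\delta \in (0, \beta - \gamma)$. Picking such a $\delta_0$ ensures $t^{-(\gamma+\delta_0)/(\beta-\gamma-\delta_0)} = o\bigl(|\log t|\,t^{-\gamma/(\beta-\gamma-\delta)}\bigr)$ as $t \downarrow 0$, so the exponential in the bound on $S(t)$ is dominated by the exponential in $t^{-\omega(t)}$, and both the polynomial prefactor $t^{-(\beta+1)/(\beta-\gamma-\delta_0)}$ and the $t^{-1-1/\beta}$ tail can be absorbed by choosing $\varpi$ sufficiently large. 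Substituting $\sqrt{S(t)} \le t^{-\omega(t)/2}$ back into the Cauchy--Schwarz bound yields the claimed estimate on $\|\nu(\cdot\,;\tau)\|_{L^2[t_0, t_1]}$. The main obstacle is precisely this final matching step: the raw derivation produces the exponent $(\gamma+\delta_0)/(\beta-\gamma-\delta_0)$, which forces $\delta_0$ to be chosen not arbitrarily in $(0,\delta)$ but strictly below $\gamma\delta/(\beta-\delta)$, a delicate but admissible choice under the stated hypotheses.
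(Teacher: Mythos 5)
Your argument is correct and runs along the same lines as the paper's proof: pass to the moment problem on $[0,T]$ with $T=t_1-t_0$, expand $\nu$ in the bi-orthogonal basis, apply Cauchy--Schwarz to peel off $\|f\|_{L^2(\Omega)}$, and then estimate the resulting sum $S(t)$ (the paper's $\cA(\tilde\tau)$) by splitting at a $t$-dependent threshold, bounding the head by (size)$\times$(max term) and the tail by the exponential decay of $e^{-\mu_n t}$. The two write-ups diverge in how the $\log\mu_n$ factor is handled. The paper keeps it explicit: it defines $N_{\tilde\tau}$ by the inequality $H\log\mu_n + 2pn^{\alpha}\mu_n^{-1/\beta}\ge \tilde\tau\mu_n^{1-1/\beta}$, shows $N_{\tilde\tau}\le L\tilde\tau^{-1/(\beta-\gamma-\delta)}$ with the \emph{same} $\delta$ as in the statement, and the head bound $e^{2pN^{\alpha}+C''N\log N}$ then matches $t^{-\omega(t)}$ directly because the $N\log N$ term naturally produces the $|\log t|$ factor already built into $t^{-\varpi t^{-\gamma/(\beta-\gamma-\delta)}}$. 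You instead absorb $\log\mu_n$ into a power $\mu_n^{\delta_0/\beta}$, replacing the exponent by the clean power law $-2\mu t + C_*\mu^{\rho}$ with $\rho=(\gamma+\delta_0)/\beta$; this permits a tidy one-variable optimization, but at the cost of an extra matching step requiring $\delta_0 < \gamma\delta/(\beta-\delta)$ so that the resulting rate $(\gamma+\delta_0)/(\beta-\gamma-\delta_0)$ sits strictly below $\gamma/(\beta-\gamma-\delta)$ — an algebraic check you carry out correctly, and one the paper avoids. You are also more explicit than the paper about the final absorption: the paper stops at the intermediate bound $\cA(\tilde\tau)\le \frac{C'}{\tilde\tau}(e^{2pN^{\alpha}+C''N\log N}+1)$ and leaves the comparison with $t^{-\omega(t)}$ implicit, whereas you spell out that both the polynomial prefactor (your $t^{-(\beta+1)/(\beta-\gamma-\delta_0)}$ versus the paper's $t^{-1}$, and the harmless $C'$) and the tail $O(t^{-1-1/\beta})$ can be folded into $\varpi$ for small $t$. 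The only soft spot is the phrase ``each contributing at most $\mu^*\exp(F(\mu^*))$'': since the head's largest $\mu_n$ is at the zero-crossing threshold rather than at the argmax $\mu^*$, the per-term bound should really be $\max_{\mu\le\text{threshold}}\mu e^{F(\mu)}$, but these differ only by a constant factor and disappear into $\varpi$ anyway, so the conclusion stands.
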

\begin{proof}
By the Corollary~\eqref{COR: 5}, we only need to check if 
\begin{equation}
    \sum_{n=1}^{\infty} \left| \frac{f_n e^{-\mu_n t_0}}{c_n e^{-\mu_n t_0}} \right|^2 e^{-2\mu_n \tau_0} =     \sum_{n=1}^{\infty} \left| \frac{f_n }{c_n} \right|^2 e^{-2\mu_n \tau_0}  <\infty
\end{equation}
for certain $\tau_0\in [0, \tau - t_0)$. Since $|c_n|\ge C e^{-pn^{\alpha}}$, this becomes $   \sum_{n=1}^{\infty} \left| f_n  \right|^2 e^{2pn^{\alpha}} e^{-2\mu_n \tau_0} $, which is finite if $\sum_{n=1}^{\infty} |f_n|^2 <\infty$. Now we give an estimate of $\|\nu(\cdot\,;\tau)\|_{L^2[t_0, t_1]}$. Denoting $T = t_1 - t_0$ and $\tilde{\tau} = \tau - t_0$, we recall~\eqref{EQ: NU SERIES} and Lemma~\ref{THM: BI-ORTH}, 
\begin{equation}\nonumber
\begin{aligned}
    \|\nu(\cdot\,;\tau)\|_{L^2[t_0, t_1]} &\le \sum_{n=1}^{\infty} \left|\frac{f_n}{c_n} \exp(-\mu_n\tilde{\tau} )\right| \sqrt{\frac{\mu_n}{2}}\kappa(T) \exp \left( K_0M^{-1/\beta} \mu_n^{1/\beta}  (\log \mu_n + K_2) \right) \\
    &\le \frac{\kappa(T)}{C}\sum_{n=1}^{\infty} \left|f_n\right|  \sqrt{\frac{\mu_n}{2}} \exp \left(  -\mu_n \tilde{\tau}+ K_0M^{-1/\beta} \mu_n^{1/\beta}  (\log \mu_n + K_2) + pn^{\alpha}\right)\\
    &\le \frac{\kappa(T)}{\sqrt{2}C} \left(\sum_{n=1}^{\infty} |f_n|^2 \right)^{1/2}  \sqrt{ \cA(\tilde{\tau}) },
\end{aligned}
\end{equation}
where $\cA(\tilde{\tau})$ is the summation
\begin{equation}
\begin{aligned}
    \cA(\tilde{\tau}) &=  \sum_{n=1}^{\infty} \exp \left(  -2\mu_n \tilde{\tau}+ 
    \log\mu_n + 2K_0M^{-1/\beta} \mu_n^{1/\beta}  (\log \mu_n + K_2) + 2pn^{\alpha}\right).
\end{aligned}
\end{equation}
We define 
\begin{equation}
\begin{aligned}
      H &:= \sup_{n\ge 1} \frac{ \log\mu_n + 2K_0M^{-1/\beta} \mu_n^{1/\beta}  (\log \mu_n + K_2) }{\mu_n^{1/\beta} \log \mu_n},\\
      N_{\tilde{\tau}} &:= \sup \left\{n\in \bbN\,\Big|\,  H \log \mu_n + 2 p n^{\alpha}\mu_n^{-1/\beta}\ge \tilde{\tau} \mu_n^{1-1/\beta} \right\}, 
\end{aligned}
\end{equation}
then we have the estimate $H = 2K_0 M^{-1/\beta} + O(1)$ and $N_{\tilde{\tau}} \le L \tilde{\tau}^{-1/(\beta-\max(\alpha,1)-\delta)}$ for arbitrary $\delta \in (0, \beta-\max(\alpha,1))$ and a constant $L = L(H, \delta) > 0$ as $\tilde{\tau}\to 0$.  On the other hand, since $xe^{-x} \le e^{-1}$ for any $x > 0$, we can derive
\begin{equation}
    \sum_{n=1}^{\infty} \exp(-\mu_n\tilde{\tau}) \le \frac{1}{e}\sum_{n=1}^{\infty} \frac{1}{\mu_n\tilde{\tau}} = \frac{C'}{\tilde{\tau}},
\end{equation}
where $C' = e^{-1}\sum_{n=1} \mu_n^{-1} < \infty$. Therefore as $\tilde{\tau}\to 0$,
\begin{equation}\nonumber
\begin{aligned}
    \cA(\tilde{\tau}) &\le \sum_{n=1}^{N_{\tilde{\tau}}} \exp \left(  -2\mu_n \tilde{\tau}+ 
    \log\mu_n + 2K_0M^{-1/\beta} \mu_n^{1/\beta}  (\log \mu_n + K_2) + 2pn^{\alpha}\right) \\ 
    &\quad + \sum_{n > N_{\tilde{\tau}}} \exp(-\mu_n\tilde{\tau})\\
    &\le \left[ \sum_{n=1}^{N_{\tilde{\tau}}} \exp(-2\mu_n\tilde{\tau}) \right] \sup_{1\le n\le N_{\tilde{\tau}}} \exp \left(
    \log\mu_n + 2K_0M^{-1/\beta} \mu_n^{1/\beta}  (\log \mu_n + K_2) + 2pn^{\alpha}\right) \\&\quad + \frac{C'}{\tilde{\tau}} \\
    &\le \frac{C'}{\tilde{\tau}} \exp \left(
    H  \beta M^{1/\beta} N_{\tilde{\tau}} \log N_{\tilde{\tau}}(1+o(1)) + 2pN_{\tilde{\tau}}^{\alpha}\right) + \frac{C'}{\tilde{\tau}}\\
    &\le \frac{C'}{\tilde{\tau}}  \left( e^{2p N_{\tilde{\tau}}^{\alpha}+ C'' N_{\tilde{\tau}}\log N_{\tilde{\tau}}}
     + 1 \right),
\end{aligned}
\end{equation}
where $C'' = C''(H, \beta, M)$ is a constant.
\end{proof}
\begin{remark}
It is worthwhile to notice the upper bound for $\|\nu(\cdot\,;\tau)\|_{L^2[t_0, t_1]}$ only depends on the differences $t_1 - t_0$ and $\tau - t_0$ and the initial time $t_0$ does not matter. Actually the dependence on $t_1 - t_0$ from $\kappa(t_1-t_0)$ in~\eqref{EQ: NORM} is quite mild. However, the dependence on $\tilde{\tau}=\tau - t_0$ is significant, which implies stability issue when $\tau$ is close to $t_0$.
\end{remark}
\begin{remark}
 One of the practical issues is that without knowing much of the self-adjoint differential operator $\cL$, can one create a sample solution corresponding to an initial condition satisfying the condition in Theorem~\ref{THM: MAIN 2}? An intuitive choice is to use a point source (or an approximate one in practice) as initial condition, which has the expansion
\begin{equation}
    \delta(x - y) = \sum_{n=1}^{\infty} \phi_n(x) \overline{\phi_n(y)}.
\end{equation}
When the set $ \cap_{n\ge 1}  \{   |\phi_n(z)| > \exp(-p n^{\alpha}) \}$ has a positive measure for certain $p$, then it is possible to randomly select single point sources to fulfill the condition in Theorem~\ref{THM: MAIN 2}. For instance, if we take the domain as $d$-dimension torus $\Omega = \mathbb{T}^d$ and $\cL = -\nabla\cdot (A(x)\nabla )$ that $A(x)$ is Lipschitz, the eigenfunction $\phi_n$ satisfies 
\begin{equation}\label{EQ: EIG EST}
    \sup_{\Omega} |\phi_n| \le C \sup_{E} |\phi_n| \left(C \frac{|\Omega|}{|E|}\right)^{C\sqrt{\mu_n}}
\end{equation}
for certain $C > 0$ for any measurable subset $E\subset \Omega$~\cite{logunov2018quantitative}. If we denote $E = \{|\phi_n|\le e^{-p n^{\alpha}}\}$ and use $ \|\phi_n\|_{L^{\infty}(\Omega)}\ge  \frac{1}{\sqrt{|\Omega|}}$,  then 
\begin{equation}
\begin{aligned}
      |\{|\phi_n(x)|\le e^{-p n^{\alpha}}\}| &\le C|\Omega| \left[\frac{C \exp(-p n^{\alpha})}{1/\sqrt{|\Omega|}}\right]^{\frac{1}{C\sqrt{\mu_n}}} \\&\le C |\Omega| \left[C\sqrt{|\Omega|}\right]^{\frac{1}{C\sqrt{\mu_n}}} \exp(-\frac{p}{C\sqrt{M}}n^{\alpha - \beta/2}(1+o(1))),
\end{aligned}
\end{equation}
where we have used the fact that $\mu_n = M n^{\beta} (1 + o(1))$.
If $\beta > \alpha > \frac{\beta}{2}$, we obtain 
\begin{equation}
    \sum_{n\ge 1}  |\{|\phi_n(x)|\le e^{-p n^{\alpha}}\}| <\infty.
\end{equation}
Therefore as $p\to\infty$, the summation converges to zero. This implies that for sufficiently large $p$, there is a high probability that a random point $z\in\Omega$ that $|\phi_n(z)| > \exp(-pn^{\alpha})$ for all $n\ge 1$. It is unclear whether such statement can be extended to high order strongly elliptic operators.

\end{remark}
\subsection{Divergent series}
It still remains to discuss about the case that
\begin{equation}
    \sum_{n=1}^{\infty} \mu_n^{-1} = \infty, 
\end{equation}
then $\cE_T = L^2[0, T]$~\cite{fattorini1971exact}. For simplicity, we let $T=\infty$,  instead of dealing with the infinite moment sequence, we consider the finite case for the first $N$ moments. Similarly, we define the bi-orthogonal functions $\widetilde{\varphi}_n$, $n=1,2,\dots, N$,
\begin{equation}\label{EQ: FINITE SUBSET}
   \int_0^{\infty} e^{-\mu_n t} \widetilde\varphi_k(t) dt  = \delta_{k, n},\quad 1\le k, n\le N.
\end{equation}
the corresponding solution is
\begin{equation}
    \nu_{N}(t) = \sum_{n=1}^N m_n \widetilde\varphi_n(t).
\end{equation}
where $\widetilde{\varphi}_n$ is optimal in $L^2$ norm. Using the same argument as Section~\ref{SEC: CONV}, the bi-orthogonal functions $\widetilde\varphi_n$ to~\eqref{EQ: FINITE SUBSET} satisfies
\begin{equation}
    \|\widetilde\varphi_n\|_{L^2[0,\infty)} = \sqrt{2{\mu_n}} \frac{\prod_{j\neq n}^{N}(1 + \frac{\mu_n}{\mu_j})}{\left| \prod_{j\neq n}^N (1 - \frac{\mu_n}{\mu_j})\right|},\quad n=1,2,\dots, N.
\end{equation}
For the sake of simplicity, we still make the same assumptions~\ref{AS: 1} and~\ref{AS: 2}. When $0 \le \beta \le 1$ and given any fixed $n$,
\begin{equation}
    \sum_{j\neq n}^{N} \log\left( \frac{1 + \frac{\mu_n}{\mu_j}}{|1 - \frac{\mu_n}{\mu_j}|} \right) = \sum_{j=1}^N  \frac{2\mu_n }{\mu_j} \left(1 + o(1)\right),\text{ as } N\to\infty,
\end{equation}
therefore 
\begin{equation}
    \|\widetilde\varphi_{n}\|_{L^2[0,\infty)} = \sqrt{2\mu_n} \exp\left(2\mu_n \sum_{j=1}^N \mu_j^{-1} \left(1 + o(1)\right)\right),\text{ as } N\to\infty.
\end{equation}
Then $ \|\widetilde\varphi_{n}\|_{L^2[0,\infty)} \to \infty$ as $N\to \infty$, we can conclude the following theorem.
\begin{theorem}\label{THM: NON}
If $\beta \in [0, 1]$, then the series solution $\lim_{N\to\infty}\nu_N$ cannot be absolutely convergent unless $m_n \equiv 0$ for all $n$.
\end{theorem}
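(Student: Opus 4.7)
The plan is to exploit the explicit norm formula for $\widetilde\varphi_n$ displayed just above the theorem, and show that any single nonzero coefficient $m_{n_0}$ already forces the absolute sum to blow up as $N\to\infty$.

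First, I would verify that the partial sum $S_N := \sum_{j=1}^{N} \mu_j^{-1}$ tends to $+\infty$. Under Assumption~\ref{AS: 1} adapted to the regime $\beta\in[0,1]$ (i.e.\ $\mu_n = Mn^{\beta}(1+o(n^{-\sigma}))$), comparison with $\sum j^{-\beta}$ yields $S_N\to\infty$, which is exactly the divergence condition that defines the ``divergent series'' case of Section~3.2. This is the only place where the hypothesis $\beta\le 1$ is invoked.

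Next, I would feed $S_N\to\infty$ into the displayed identity
\begin{equation*}
\|\widetilde\varphi_n\|_{L^2[0,\infty)}
= \sqrt{2\mu_n}\,\exp\!\bigl( 2\mu_n S_N (1+o(1))\bigr),\qquad N\to\infty,
\end{equation*}
with $n$ fixed. For fixed $n$ the factor $\mu_n$ is a constant, and the $o(1)$ correction (which arises from the comparison of $\sum_{j\ne n}^{N}\log\frac{1+\mu_n/\mu_j}{|1-\mu_n/\mu_j|}$ with $\sum_{j=1}^{N} 2\mu_n/\mu_j$) is uniform in the single variable $N\to\infty$. Hence for each fixed $n$, the norm $\|\widetilde\varphi_n^{(N)}\|_{L^2[0,\infty)}$ diverges to $+\infty$ as $N\to\infty$.

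Finally, I would conclude by contradiction. Absolute convergence of $\lim_{N\to\infty}\nu_N = \lim_{N\to\infty}\sum_{n=1}^{N} m_n \widetilde\varphi_n^{(N)}$ in $L^2[0,\infty)$ means that $\sum_{n=1}^{N}|m_n|\,\|\widetilde\varphi_n^{(N)}\|_{L^2[0,\infty)}$ stays bounded uniformly in $N$. Suppose some $m_{n_0}\neq 0$. For every $N\ge n_0$ the sum is bounded below by the single term $|m_{n_0}|\,\|\widetilde\varphi_{n_0}^{(N)}\|_{L^2[0,\infty)}$, which by the previous paragraph tends to $\infty$. This contradicts uniform boundedness, forcing $m_n=0$ for all $n$.

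The only subtle point is the legitimacy of isolating a single term and letting its norm drive the sum to infinity, i.e.\ making precise that the $o(1)$ in the exponent is uniform as $N\to\infty$ for each fixed $n$. This follows because for fixed $n$ the excluded index $j=n$ contributes a bounded perturbation and the remaining tail of $\log\frac{1+\mu_n/\mu_j}{|1-\mu_n/\mu_j|}$ is dominated by its leading expansion $2\mu_n/\mu_j$ uniformly in the direction $N\to\infty$; no subtle cancellation between different $n$'s is needed since the lower bound uses a single term. Everything else is the divergence of $S_N$ and monotone minoration of an absolutely convergent series by one of its summands.
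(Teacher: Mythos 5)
Your proof follows the same line as the paper's: identify the divergence of $S_N=\sum_{j=1}^N\mu_j^{-1}$ for $\beta\in[0,1]$, use the product/norm identity for $\widetilde\varphi_n$ to show that, for each fixed $n$, $\|\widetilde\varphi_n^{(N)}\|_{L^2[0,\infty)}\to\infty$ as $N\to\infty$, and conclude that no nonzero $m_{n_0}$ can survive absolute convergence. The only thing you add beyond what the paper writes is making the final minoration-by-one-term contradiction explicit, and flagging that the $o(1)$ in $\sum_{j\ne n}^N\log\frac{1+\mu_n/\mu_j}{|1-\mu_n/\mu_j|}=2\mu_n S_N(1+o(1))$ is for fixed $n$ as $N\to\infty$ — both correct and worth saying, but the same proof.
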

However, the solution still can be convergent for certain cases. As an interesting example, we consider that $\mu_n = n-\frac{1}{2}$, $n=1,2,\dots$ and $T=\infty$, then finding the interpolating function is equivalent to solve the moment problem
\begin{equation}
    \int_0^{\infty} e^{-(n - \frac{1}{2}) t} \nu(t) dt = m_n.
\end{equation}
With a change of variable $x = e^{-t}$ and denote $\tilde{\nu}(x) = \nu(t)$, this is similar to the Hausdorff moment problem 
\begin{equation}\label{EQ: HAUSDORFF}
    \int_0^1 x^{n-1}\cdot  x^{-1/2}\tilde{\nu}(x) dx = m_n .
\end{equation}
\begin{definition}
For a given sequence $m_1, m_2, \dots$, we define
\begin{equation}
\begin{aligned}
     \lambda_{k, k'} &:= \binom{k}{k'}\sum_{l=0}^{k-k'} (-1)^{k-k'+l}  \binom{k-k'}{l} m_{k-l+1},\quad k\ge k' \ge 0.
\end{aligned}
\end{equation}
\end{definition}
\begin{lemma}[Widder~\cite{widder2015laplace}]
Let $\overline{L} > 0$ be an arbitrary fixed number, then the following condition
\begin{equation}\label{EQ: COND}
    (k+1) \sum_{k'=0}^{k} |\lambda_{k, k'}|^2 < \overline{L},\quad\forall k=0,1,\dots
\end{equation}
is necessary and sufficient for  ${x}^{-1/2}\tilde{\nu}(x)\in L^2[0,1]$ satisfying the moment problem~\eqref{EQ: HAUSDORFF}.
\end{lemma}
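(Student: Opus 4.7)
The plan is to interpret $\lambda_{k,k'}$ as integrals of $g(x):=x^{-1/2}\tilde\nu(x)$ against Bernstein basis polynomials $B_{k,k'}(x):=\binom{k}{k'}x^{k'}(1-x)^{k-k'}$, and to recast the quadratic form $(k+1)\sum_{k'=0}^k |\lambda_{k,k'}|^2$ via the Bernstein--Durrmeyer operator
\[
    (\mathcal{D}_k h)(x):=(k+1)\sum_{k'=0}^k B_{k,k'}(x)\int_0^1 B_{k,k'}(y)h(y)\,dy,
\]
whose spectral properties drive both directions of the equivalence. Applying the binomial theorem to the inner finite difference in the definition of $\lambda_{k,k'}$ and using $m_n=\int_0^1 x^{n-1}g(x)\,dx$ gives the representation $\lambda_{k,k'}=\int_0^1 B_{k,k'}(x)g(x)\,dx$; a direct calculation then yields the identity $(k+1)\sum_{k'=0}^k |\lambda_{k,k'}|^2=\langle\mathcal{D}_k g,g\rangle_{L^2[0,1]}$. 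Since $(k+1)B_{k,k'}$ is a probability density on $[0,1]$ and $\sum_{k'=0}^k B_{k,k'}\equiv 1$, Jensen's inequality shows $\mathcal{D}_k$ is a contraction on every $L^p[0,1]$ with $p\ge 1$. Necessity then follows at once: if $g\in L^2[0,1]$ then $(k+1)\sum_{k'=0}^k |\lambda_{k,k'}|^2\le\|g\|_{L^2}^2$ uniformly in $k$, so $\overline L:=\|g\|_{L^2}^2$ works.

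For the sufficiency direction I would introduce the Bernstein--Durrmeyer reconstruction $g_k(x):=(k+1)\sum_{k'=0}^k \lambda_{k,k'}B_{k,k'}(x)$, defined purely from the moment data. A pointwise Cauchy--Schwarz inequality using the probability weights $B_{k,k'}(x)$ gives $\bigl(\sum_{k'}\lambda_{k,k'}B_{k,k'}(x)\bigr)^2\le \sum_{k'}|\lambda_{k,k'}|^2 B_{k,k'}(x)$, hence $\|g_k\|_{L^2[0,1]}^2\le(k+1)\sum_{k'}|\lambda_{k,k'}|^2\le\overline L$. Extracting a weakly convergent subsequence $g_{k_j}\rightharpoonup g$ in $L^2[0,1]$ supplies a candidate with $\|g\|_{L^2}^2\le\overline L$. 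To identify $g$ as having the prescribed moments, I would observe that $\int_0^1 x^{n-1}g_k(x)\,dx$ is an explicit linear combination of $m_1,\dots,m_n$ — specifically $\sum_{l=0}^{n-1}c_{l,k,n}\,m_{l+1}$, where $c_{l,k,n}$ are the monomial coefficients of $\mathcal{D}_k[x^{n-1}]$ — and show $c_{l,k,n}\to\delta_{l,n-1}$ as $k\to\infty$, so that $\int_0^1 x^{n-1}g_k\,dx\to m_n$. Combined with the weak convergence this yields $\int_0^1 x^{n-1}g(x)\,dx=m_n$ for all $n$, and classical Hausdorff determinacy on $[0,1]$ gives uniqueness of the $L^2$ representative.

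The main obstacle is the polynomial-reproducing property $\mathcal{D}_k[x^{n-1}]\to x^{n-1}$ uniformly as $k\to\infty$, which is what turns the abstract Cauchy--Schwarz bound on $g_k$ into honest moment recovery from moment data alone. I would handle this via the spectral diagonalization of $\mathcal{D}_k$ in the shifted Legendre basis $\{\tilde P_j\}$: the operator satisfies $\mathcal{D}_k\tilde P_j=\eta_{k,j}\tilde P_j$ with eigenvalues $\eta_{k,j}=\frac{k!(k+1)!}{(k-j)!(k+j+1)!}$ for $0\le j\le k$ (and $0$ otherwise), and $\eta_{k,j}\to 1$ as $k\to\infty$ for each fixed $j$; the uniform convergence $\mathcal{D}_k[x^{n-1}]\to x^{n-1}$ then follows from expanding $x^{n-1}$ in this finite-dimensional basis.
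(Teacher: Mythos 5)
The paper itself does not prove this lemma; it is cited verbatim from Widder, so the relevant comparison is with the classical $L^2$ Hausdorff moment theorem in~\cite{widder2015laplace}. Your argument is correct and rests on the same kernel identity as Widder's, namely $\lambda_{k,k'}=\int_0^1 B_{k,k'}(x)\,g(x)\,dx$, which you obtain (correctly) by reindexing the inner finite difference and recognizing the binomial expansion of $x^{k'}(1-x)^{k-k'}$. Where you genuinely depart is in packaging $(k+1)\sum_{k'}|\lambda_{k,k'}|^2$ as $\langle\mathcal{D}_k g,g\rangle_{L^2}$ for the Bernstein--Durrmeyer operator, then calling on its $L^2$-contractivity (via Jensen, using that the $(k+1)B_{k,k'}$ are probability densities and $\sum_{k'}B_{k,k'}\equiv 1$) for necessity, and on its diagonalization in the shifted Legendre basis with eigenvalues $\eta_{k,j}=k!(k+1)!/[(k-j)!(k+j+1)!]\to 1$ for the approximation step in sufficiency. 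This is cleaner and more conceptual than Widder's direct quadratic-form manipulations, which predate the Durrmeyer operator (1967) and Derriennic's Legendre diagonalization (1981), at the cost of importing those two nontrivial facts as black boxes.

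One step you elide deserves a word. The identity $\int_0^1 x^{n-1}g_k\,dx=\sum_{l=0}^{n-1}c_{l,k,n}\,m_{l+1}$, with $c_{l,k,n}$ the monomial coefficients of $\mathcal{D}_k[x^{n-1}]$, must hold as an algebraic identity in the sequence $(m_n)$ alone, since in the sufficiency direction no representing $g$ is yet available and so you cannot simply invoke self-adjointness of $\mathcal{D}_k$ against $g$. It does hold: both sides are linear functionals of $(m_1,\dots,m_{k+1})$ only; they agree whenever $(m_n)$ is the moment sequence of a monomial $x^j$, $j=0,\dots,k$ (there self-adjointness applies directly); and those $k+1$ moment vectors span $\bbR^{k+1}$ because the associated Cauchy-type matrix $\bigl(1/(n+j)\bigr)$ is invertible. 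With that observation supplied, your sketch is a complete and valid proof.
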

\begin{corollary}
If and only if the condition~\eqref{EQ: COND} is satisfied, the interpolation function $\nu(t)$ exists in $L^2[0, \infty)$ and 
\begin{equation}
    \int_0^{\infty} |\nu(t)|^2 dt  = \int_{0}^1 x^{-1}|\tilde{\nu}(x)|^2 dx < \infty
\end{equation}
\end{corollary}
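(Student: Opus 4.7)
The plan is to reduce the exponential moment problem directly to the Hausdorff moment problem~\eqref{EQ: HAUSDORFF} via the change of variables $x=e^{-t}$, and then invoke Widder's lemma as a black box.

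First, I would verify the substitution $x=e^{-t}$ carefully. With this change, $dt=-dx/x$, $e^{-(n-1/2)t}=x^{n-1/2}$, and the endpoints $t=0,\infty$ correspond to $x=1,0$. Setting $\tilde\nu(x):=\nu(-\log x)$, the original moment equation $\int_0^\infty e^{-(n-1/2)t}\nu(t)\,dt=m_n$ becomes
\begin{equation}
\int_0^1 x^{n-1}\cdot x^{-1/2}\,\tilde\nu(x)\,dx=m_n,
\end{equation}
which is precisely~\eqref{EQ: HAUSDORFF} for the unknown $g(x):=x^{-1/2}\tilde\nu(x)$.

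Second, I would use the same substitution to establish the norm identity
\begin{equation}
\int_0^\infty |\nu(t)|^2\,dt=\int_0^1 x^{-1}|\tilde\nu(x)|^2\,dx=\int_0^1|g(x)|^2\,dx,
\end{equation}
so that $\nu\in L^2[0,\infty)$ if and only if $g\in L^2[0,1]$, and the two norms coincide.

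Third, combining the previous two steps with Widder's lemma yields the corollary. Indeed, Widder's lemma asserts that~\eqref{EQ: COND} is necessary and sufficient for the existence of $g\in L^2[0,1]$ solving~\eqref{EQ: HAUSDORFF}. By the identification $g=x^{-1/2}\tilde\nu$ and the norm identity above, this is in turn equivalent to the existence of $\nu\in L^2[0,\infty)$ solving the original exponential moment problem, and the claimed equality of $L^2$ norms follows directly from the change of variables. There is no substantive obstacle here, since all the analytic content is already packaged inside Widder's lemma; the proof amounts to a careful bookkeeping of the Jacobian and the identification of the weight $x^{-1/2}$ with the shift $\mu_n=n-1/2$.
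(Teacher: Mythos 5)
Your proof is correct and follows exactly the route the paper intends: the change of variables $x=e^{-t}$ is already set up in the text immediately preceding the corollary, and the corollary is a direct combination of that substitution, the elementary norm identity $\int_0^\infty|\nu|^2\,dt=\int_0^1 x^{-1}|\tilde\nu|^2\,dx=\|x^{-1/2}\tilde\nu\|_{L^2[0,1]}^2$, and Widder's lemma applied to $g=x^{-1/2}\tilde\nu$. The paper gives no explicit proof precisely because the argument is this bookkeeping, which you have carried out correctly.
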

Especially, if $m_{n} = {n}^{-1}$, one can compute directly that $(k+1) \sum_{n=0}^{k} |\lambda_{k, n}|^2 \equiv 1$ which permits a solution in $L^2[0,\infty)$. However, if we set $c_n = e^{-pn^{\alpha}}$ (same as Theorem~\ref{THM: MAIN 2}) and use the same initial condition that $$\sum_{n=1}^{\infty} |f_{n}|^2 < \infty,\;\text{ and }\;  m_{n} = e^{-(n-\frac{1}{2})\tau} \frac{f_{n}}{c_n} = e^{-(n-\frac{1}{2})\tau} e^{pn^{\alpha}}f_n, \quad n\ge 1.$$
Then for a given $k$, let $A$ denotes the upper triangle matrix 
\begin{equation}
    A_{l, l'} = \sqrt{k+1} \binom{k}{l}  e^{-(l'+\frac{1}{2})\tau} e^{(l'+1)^{\alpha} p} (-1)^{l'-l}  \binom{k-l}{l'-l},\quad 0\le l \le l'\le k
\end{equation}
and denote $\bff = (f_1, \dots, f_k)^T\in \ell^2$, then
\begin{equation}
\begin{aligned}
  \sup_{\|\bff\|=1} \sum_{k'=0}^k (k+1) |\lambda_{k, k'}|^2 &= \sup_{\|\bff\|=1}\bff^T A^TA \bff = \sup_{0\le l\le k} \sigma_{l}^2(A) \\&\ge \sup_{0\le l\le k} |A_{l,l}|^2 = \sup_{0\le l\le k}\left[\sqrt{k+1}\binom{k}{l} e^{-(l+\frac{1}{2})\tau}e^{(l+1)^{\alpha}p} \right]^2\\
  &\ge (k+1) e^{-\tau+2p}\to\infty,\quad \text{as}\;\; k\to\infty.
\end{aligned}
\end{equation}
This implies that there exists $\bff=(f_1, f_2, \dots, )^T\in \ell^2$ such that the moment problem has no solution in $L^2[0, \infty)$ as long as $\tau$ is finite. 
\section{Multiplicities}
In this section, we study the case that the eigenvalues have finite multiplicities. Denote the maximal multiplicity as $D$, we show that at most $D$ sampled trajectories $\{u_j\}_{j=1}^{D}$ are sufficient to span the solution subspace, that is, if $\tau > t_0$ then there exist interpolation functions $\nu_j\in L^2[t_0, t_1]$, $j=1,2,\dots, D$ that
\begin{equation}\label{EQ: MUL}
    w(x, \tau) = \sum_{j=1}^{D} \int_{t_0}^{t_1} u_j(x, t) \nu_j(t) dt
\end{equation}
Let the eigenvalues of $\cL$ be $0<\mu_1 <\mu_2 <\cdots$ without counting the multiplicities and the eigenfunctions $\{\phi_{n, l}\}$, $l=1,2,\dots, d_n$, are corresponding to the eigenvalue $\mu_n$, where $d_n\le D$ is the multiplicity of eigenvalue $\mu_n$, then for each sample solution $u_j$, we may write
\begin{equation}
    u_j(x, t)  = \sum_{n=1}^{\infty} e^{-\mu_n t} \sum_{l=1}^{d_n} b_{j,l}^n \phi_{n, l}(x), \quad j=1, 2, \ldots, D
\end{equation}
for some coefficients $\{ b_{j,l}^n \}$. Consider an arbitrary initial condition $f(x)\in L^2(\Omega)$ that
\begin{equation}
    f(x) = \sum_{n=1}^{\infty} \sum_{l=1}^{d_n} f_{n, l} \phi_{n, l}(x),
\end{equation}
the solution to~\eqref{EQ: EVOL} at time $\tau > t_0$ will be 
\begin{equation}
    w(x, \tau) = \sum_{n=1}^{\infty} e^{-\mu_n \tau} \sum_{l=1}^{d_n} f_{n, l} \phi_{n, l}(x).
\end{equation}
If the matrix $B_{n} = (b_{j,l}^n)_{jl}\in\bbR^{D\times d_n}$ is of full rank $d_n$ for each $n\ge 1$, there exist $p_{j}^n$, $j=1,2,\dots, D$ such that $     f_{n, l} = \sum_{j=1}^D p^n_{j} b_{j,l}^n, l=1, 2, \ldots, d_n $. It means there is a decomposition $f(x) = \sum_{j=1}^D f_j(x)$, where each $f_j$ is
\begin{equation}
    f_j(x) = \sum_{n=1}^{\infty} p^n_{j} \sum_{l=1}^{d_n} b_{j,l}^n  \phi_{n, l}(x).
\end{equation}
As a summary of above analysis, we have the following theorem as an analogue of Theorem~\ref{THM: MAIN 2} for eigenvalues with multiplicities.
\begin{theorem}\label{THM: MAIN 3}
Suppose the function $g_j(x)$ satisfies that
\begin{equation}
    g_j(x) = \sum_{n=1}^{\infty} \sum_{l=1}^{d_n} b^{n}_{jl} \phi_{n,l}(x),\quad j=1,2,\dots, D.
\end{equation}
Denote $B_n^{\dagger}$ as the Moore-Penrose inverse of $B_n$ and assume  $\|B_n^{\dagger}\|\le L e^{pn^{\alpha}}$, $\alpha\in[0, \beta)$, $n=1,2,\dots$.
Let $f\in L^2(\Omega)$ and denote $u_j(x, t)$, $w(x,t)$ the solutions with respect to initial conditions $g_j(x)$ and $f(x)$ respectively, then there exists an interpolation function $\nu_j(t; \tau, [t_0, t_1])\in L^2[t_0, t_1]$ that 
\begin{equation}\label{EQ: INT MUL}
    w(x, \tau) = \sum_{j=1}^D  \int_{t_0}^{t_1} u_j(x, t) \nu_j(t;\tau) dt,\quad \tau > t_0.
\end{equation}
\end{theorem}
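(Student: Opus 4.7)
The plan is to reduce Theorem~\ref{THM: MAIN 3} to the scalar case of Corollary~\ref{COR: 5} via the decomposition $f = \sum_{j=1}^D f_j$ sketched immediately before the theorem. First I would choose the coefficients $p^n = (p_1^n,\ldots,p_D^n)^T$ to be the minimum-norm solution of $B_n^T p^n = f^n$, namely $p^n = (B_n^\dagger)^T f^n$, where $f^n = (f_{n,1},\ldots,f_{n,d_n})^T$. The hypothesis $\|B_n^\dagger\|\le L e^{pn^\alpha}$ then yields the key pointwise bound
\[
|p_j^n| \;\le\; \|p^n\|_2 \;\le\; \|B_n^\dagger\|\,\|f^n\|_2 \;\le\; L e^{pn^\alpha}\,\|f^n\|_2,
\]
and gives the splitting $w(x,\tau) = \sum_{j=1}^D w_j(x,\tau)$ with $w_j(x,\tau) = \sum_n e^{-\mu_n \tau}\, p_j^n \sum_l b_{j,l}^n \phi_{n,l}(x)$, which is the forward evolution of $f_j$ in~\eqref{EQ: EVOL}.

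Next, for each fixed $j$ I would seek $\nu_j(\,\cdot\,;\tau)\in L^2[t_0,t_1]$ realizing $w_j(x,\tau) = \int_{t_0}^{t_1} u_j(x,t)\nu_j(t;\tau)\,dt$. Matching coefficients in the eigenbasis along the one-dimensional blocks spanned by $\sum_l b_{j,l}^n\phi_{n,l}$ reduces this identity to the scalar exponential moment problem
\[
\int_{t_0}^{t_1} e^{-\mu_n t}\,\nu_j(t;\tau)\,dt \;=\; e^{-\mu_n\tau}\, p_j^n,\qquad n\ge 1,
\]
which, after a time shift $t \mapsto t - t_0$, is precisely the setting of Corollary~\ref{COR: 5} with the role of $f_n/c_n$ played by $p_j^n$ and with effective end time $\tilde\tau := \tau - t_0 > 0$.

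It then remains to verify the convergence hypothesis of Corollary~\ref{COR: 5}: for any $\tau_0\in(0,\tilde\tau)$, using the pointwise estimate above and Parseval ($\sum_n \|f^n\|_2^2 = \|f\|_{L^2(\Omega)}^2$),
\[
\sum_{n=1}^\infty |p_j^n|^2 e^{-2\mu_n\tau_0} \;\le\; L^2 \Big(\sup_{n\ge 1} e^{2pn^\alpha - 2\mu_n\tau_0}\Big) \|f\|_{L^2(\Omega)}^2 \;<\; \infty,
\]
where the finiteness of the supremum follows from Assumption~\ref{AS: 1} ($\mu_n = Mn^\beta(1+o(1))$) together with $\alpha < \beta$. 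Applying Corollary~\ref{COR: 5} for each $j=1,\ldots,D$ and summing the resulting identities produces~\eqref{EQ: INT MUL}. I expect the main technical point to be the pseudoinverse step: one must select the minimum-norm $p^n$ rather than an arbitrary right-inverse of $B_n^T$, since it is only through $p^n = (B_n^\dagger)^T f^n$ that the spectral hypothesis on $\|B_n^\dagger\|$ translates into a componentwise bound on $|p_j^n|$ strong enough to feed the $\ell^2$ convergence check.
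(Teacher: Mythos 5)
Your proposal is correct and follows essentially the same route as the paper: decompose $f=\sum_j f_j$ via the pseudoinverse of $B_n$, bound $|p_j^n|\le Le^{pn^\alpha}\|f^n\|_2$, reduce to a scalar exponential moment problem for each $j$, and invoke the results of Section~3. Your only real departure is cosmetic but worth noting: you correctly write $\bp^n=(B_n^\dagger)^T\bff^n$ (the minimum-norm solution of $B_n^T\bp^n=\bff^n$), whereas the paper's line $\bp^n=B_n^\dagger\bff^n$ has a dimension mismatch with $B_n\in\bbR^{D\times d_n}$; since $\|(B_n^\dagger)^T\|=\|B_n^\dagger\|$ the estimates are unchanged, and your direct appeal to Corollary~\ref{COR: 5} with the Parseval argument is an unobjectionable substitute for the paper's citation of Theorem~\ref{THM: MAIN 2}.
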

\begin{proof}
Denote $\bp^n = (p_1^n,\dots, p_D^n)$ and $\bff^n = (f_{n,1}, \dots, f_{n,d_n})$, then $\bp^n = B_n^{\dagger}\bff^n$. Define 
$
\hat{\phi}_{j,n}(x)=\sum_{l=1}^{d_n} b^{n}_{jl} \phi_{n,l}(x), n=1,2,\ldots
$
which form an orthogonal set for each $j=1, 2,\ldots, D$. We have
\[
u_j(x,t)=\sum_{n=1}^{\infty}e^{-\mu_nt}\hat{\phi}_{j,n}(x), \quad w_j(x,t)=\sum_{n=1}^{\infty}e^{-\mu_nt}p_j^n\hat{\phi}_{j,n}(x), \quad w(x,t)=\sum_{j=1}^{D}w_j(x,t).
\]
Since $\|B_n^{\dagger}\|\le Le^{pn^{\alpha}}$, 
\begin{equation}
    |p_j^n|\le \|\bp^n\| \le Le^{{p}n^{\alpha}} \|\bff^n\|\le  Le^{{p}n^{\alpha}} \|f\|_{L^2(\Omega)} \le C e^{{p}n^{\alpha}},\quad j=1,2,\dots, D,
\end{equation}
according to the Theorem~\ref{THM: MAIN 2}, there exist $\nu_j\in L^2[t_0, t_1]$, $w_j(x, \tau) =  \int_{t_0}^{t_1} u_j(x, t) \nu_j(t;\tau) dt$.
\end{proof}
\begin{remark}
The above theorem requires finite multiplicities of eigenvalues to obtain the exact interpolation~\eqref{EQ: INT MUL} which may not be true in general, for instance, $-\Delta$ in 2D unit square. However instead of producing an accurate representation, we usually only need to find an approximation. If the solution trajectories $\{u_i\}_{i=1}^D$ can capture the subspace spanned by the leading eigenfunctions up to a small tolerance, the approximation suffices for practical uses. 
\end{remark}

\section{Application}
\subsection{Data driven model reduction}
In the case that snapshots of a sample solution trajectory can be superposed to approximate any snapshot of an arbitrary solution, one can use a data driven approach for model reduction and approximation of new solution bypass explicit learning of the underlying PDE. For example, if the sample solution $u(x,t)$ is observed on the time interval $[t_0,t_1]$, then in theory one can approximate any solution at a later time $\tau > t_0$ using certain superposition of the observed sample solution snapshots. In particular, if one can find a finite dimensional space $V$ to which the sample solution trajectory is close, 
\begin{equation}
\label{EQ: approximation}
    \|u(x,t) - P_V u(x,t)\|_{L^2(\Omega)}\le \eps \|u(x, 0)\|_{L^2(\Omega)}, \quad t\in [t_0, t_1], 
\end{equation}
where $P_V$ is the projection operator onto $V$, then any solution $w(x,\tau),\tau>t_0$ is close to $V$ as well since 
\begin{equation}\label{EQ: W ERR}
\begin{aligned}
     \left\|w(x, \tau) - P_V w(x,\tau) \right\|_{L^2(\Omega)} &= \left\|\int_{t_0}^{t_1} u(x, t)\nu(t) dt - \int_{t_0}^{t_1} P_V u(x, t)\nu(t;\tau) dt\right\|_{L^2(\Omega)}\\
     \\ &\le \int_{t_0}^{t_1} \left\|(u(x, t) - P_Vu(x,t) )\nu(t;\tau)\right\|_{L^2(\Omega)} dt \\
     &\le\eps \|u(x, 0)\|_{L^2(\Omega)} \int_{t_0}^{t_1} |\nu(t;\tau)|dt\\
     &\le \eps \sqrt{(t_1 - t_0)} \|\nu(\cdot\,;\tau)\|_{L^2[t_0, t_1]}\|u(x, 0)\|_{L^2(\Omega)}.
\end{aligned}
\end{equation}
However, fixing $t_0$ and $t_1$ and let $\tau\to t_0^{+}$, the norm $\|\nu(\cdot\,; \tau)\|_{L^2[t_0, t_1]}$ will increase rapidly according to Theorem~\ref{THM: MAIN 2}. For $u_t=-\cL u$, where $\cL$ is a self-adjoint elliptic operator, it has been shown~\cite{he2022much} that given any $\eps > 0$, for any solution trajectory $u(x,t)$ on $[t_0, t_1]$,  there exists a linear subspace $V\subset L^2(\Omega)$ of dimension $\dim(V) = \cO(|\log\eps|\log(t_1/t_0))$ such that \eqref{EQ: approximation} is satisfied. 

To find a discrete approximation of the subspace $V$ for a sample solution $u(x,t)$, one can observe $u(x_i,\tau_j)$ on a space time grid, $(x_i,\tau_j)$, with grid size $h,\Delta t$ in space and time respectively such that, for any $j$ and $\forall t \in [\tau_j, \tau_{j+1})$, $\|u(\cdot,t)-I_t[u(\cdot,\tau_j),u(\cdot,\tau_{j+1})]\|_{L^2(\Omega)}=O(\Delta t^2)=O(\eps) $ and $\|u(\cdot,\tau_j)-I_x[u(x_i,\tau_j)]\|_{L^2(\Omega)}=O(h^2)=O(\eps)$, where $I_t$ and $I_x$ are linear interpolation operators in time and space respectively.
Let $U_{ij} = u(x_i, \tau_j)$ denote the solution matrix whose singular value decomposition (SVD) is $U = P \Sigma Q^{\ast}$. $V$ can be approximated by the finite dimensional linear space spanned by the leading left-singular vectors from columns of $P$. Once $V$ is found, it can be used for model reduction and other applications. For example, an arbitrary solution can be approximated well by a linear combination of the basis of $V$. The coefficients can be determined by a few measurements, e.g., at a few locations or some integral quantities,  of the new solution without knowing or solving the underlying PDE (see an example in~\ref{SEC: EXP 5}).

\subsection{Noisy data}\label{SEC: NOISE}
In practice, the observed solution data $u(x_i,\tau_j)$ could have some measurement errors. Hence it is natural to ask if the subspace $V$ is stable under such perturbations. For that reason, we assume the noisy sampled solution matrix $$\tilde{U}_{ij} = \tilde{u}(x_i,\tau_j) = u(x_i,\tau_j) + e(x_i, \tau_j),$$ where $u(x_i,\tau_j)$ denotes the exact solution data and the random perturbations $e(x_i, \tau_j)$ are i.i.d mean-zero random variables with variation $\delta^2$. We approximate $\tilde{V}$ from the corresponding singular vectors of $\tilde{U}$ such that $\dim(\tilde{V}) = \dim(V) = \upsilon$. Suppose $U = P \Sigma Q^{\ast}$ and $\tilde{U} = \tilde{P}\tilde{\Sigma} \tilde{Q}^{\ast}$, the perturbation matrix $E = \tilde{U} - U$, then from the Wedin's theorem~\cite{wedin1972perturbation}, 
\begin{equation}\label{EQ: WEDIN}
    \| \sin \Theta(V, \tilde{V}) \|_{F}\le \frac{\sqrt{2\upsilon}}{\ell} \|E\|_{F},
\end{equation}
where $\Theta(V, \tilde{V})$ denotes the canonical angles between $V$ and $\tilde{V}$, the left-hand side measures the difference between the projection mappings onto $V$ and $\tilde{V}$, respectively, see~\cite{stewart1998perturbation} and references therein for detailed discussions. Let $\sigma_i$ and $\tilde{\sigma}_i$ be the $i$-th singular value of $U$ and $\tilde{U}$, respectively, the parameter $$\ell := \min\{ \min_{1\le i\le\upsilon,1\le j\le N -\upsilon} |\sigma_{i} - \tilde{\sigma}_{\upsilon + j}|, \min_{1\le i\le \upsilon} \sigma_i \}. $$ 
Particularly, the Bernoulli random perturbation has been studied in~\cite{vu2011singular} and Gaussian random perturbation is considered in~\cite{wang2015singular}. For $u_t=-\cL u$, where $-\cL$ is a self-adjoint elliptic operator, given any $\eps>0$ and any solution trajectory $u(x,t)$ on $[t_0, t_1]$,  there exists a linear subspace $V\subset L^2(\Omega)$ of dimension $\dim(V) = \cO(|\log\eps|\log(t_1/t_0))$ such that \eqref{EQ: approximation} is satisfied~\cite{he2022much}. This means $\sigma_n = O(e^{-n})$ which makes $\ell$ decay very fast with respect to $\upsilon$ and the computation of $V$ is sensitive to noise.

When the resolution in time is sufficiently fine, due to the linearity, instead of taking the singular value decomposition directly on the sample solution data $\tilde{u}(x_i, \tau_j)$, we may first regularize the data by averaging on a time window $[\tau_j, \tau_j+S\Delta t]$ 
\begin{equation}\label{EQ: V}
\begin{aligned}
    {v}(x_i, \tau_j) &= \frac{1}{S+1}\sum_{s=0}^S\tilde{u}(x_i, \tau_{j+s}) = \frac{1}{S+1}\sum_{s=0}^S {u}(x_i, \tau_{j+s}) + \tilde{e}_{i,j},
\end{aligned}
\end{equation}
where the variation of $\tilde{e}_{i,j}$ is $\text{Var}\left[\frac{1}{S+1}\sum_{s=0}^S e(x_i, \tau_{j+s})\right] = \frac{\delta^2}{S+1}$. This process is equivalent to sampling the averaged data 
\begin{equation}\label{EQ: MODIFIED}
    v(x, t) = \sum_{n=1}^{\infty} e^{-\mu_n t}  \left(\frac{1}{(S+1)}\frac{1 - e^{-(S + 1)\mu_n \Delta t} }{1 - e^{-\mu_n \Delta t}} \right) c_n \phi_n(x) + \tilde{e}(x, t),
\end{equation}
where $\tilde{e}(x,t)$ is a random variable for each $(x,t)$ with variance $\frac{\delta^2}{S+1}$. Therefore as $\Delta t\to 0$ is sufficiently fine, one can fix $r>0$ and take $S = r\Delta t^{-1}\to\infty$ to reduce the noise variations, then we perform the singular value decomposition on the modified solution~\eqref{EQ: MODIFIED} which has a smaller error bound in~\eqref{EQ: WEDIN}. The above modified sampled solution trajectory becomes
\begin{equation}
    \lim_{S\to\infty} v(x, t)\stackrel{P}{=} \sum_{n=1}^{\infty} e^{-\mu_n t}  \left(\frac{1 - e^{-r\mu_n} }{r\mu_n} \right) c_n \phi_n(x),
\end{equation}
which averages the ${u}(x,t)$ in a window $(t, t+r)$, the coefficient $\frac{1 - e^{-r\mu_n} }{r\mu_n}c_n$ still satisfies the condition in Theorem~\ref{THM: MAIN 2}, although the factor $\frac{1}{\mu_n}$ will make the interpolation function $\|\tilde{\nu}(t)\|_{L^2[t_0,t_1]}$ larger than using the true $u(x,t)$ data. As a summary, if the solution data are finely sampled and contain mean-zero noises, we can simply perform a local average on finely sampled solution and extract the subspace from the smoothed solution.

\section{Numerical experiments}
The following experiments are computed with MATLAB 2016a. The experiment source code is hosted at GitHub~\footnote{\href{https://github.com/lowrank/pde-subspace}{https://github.com/lowrank/pde-subspace}}.
\subsection{Experiment 1}\label{EX: 1}
In this experiment, we take the elliptic operator  $\cL u = -u_{xx}$ on $[0, 1]$ with Dirichlet boundary condition. The eigenvalue $\mu_n = \pi^2 n^2$, $n=1,2,\dots$, which satisfies the Assumption~\ref{AS: 1} with $\beta=2$ and Assumption~\ref{AS: 2}. We take the sample solution as 
\begin{equation}
    u(x, t) = \sum_{n=1}^{\infty} \frac{(-1)^n}{n^2} e^{-n^2\pi^2 t} \sin n \pi x
\end{equation}
on the time span $t\in[10^{-6},  1]$ and the subspace $V$ is computed by selecting the leading left-singular vectors above the singular value threshold $10^{-12}$ and $\dim(V) = 27$ in this example.
It should be pointed out that the singular vectors are not necessarily close to eigenfunctions, see Fig~\ref{FIG: SING VEC}.
\begin{figure}[!htb]
    \centering
    \includegraphics[scale=0.5]{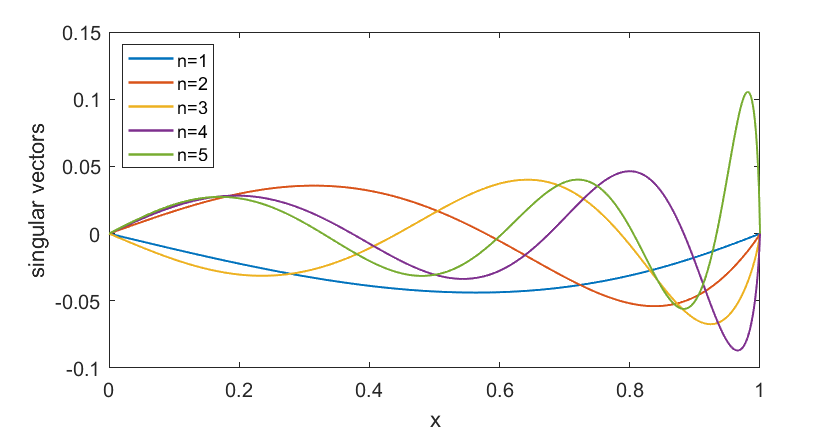}
    \caption{The first five singular vectors of sampled solution data.}
    \label{FIG: SING VEC}
\end{figure}

We then validate Theorem~\ref{THM: MAIN 2} with solutions corresponding to eigenmodes
\begin{equation}
    w_n(x, \tau) = e^{-n^2 \pi^2 \tau} \sin n\pi x,\quad 1\le n \le 8
\end{equation}
at time $\tau  = 0.1$. If $\tau$ is very close to $t_0=10^{-6}$, the norm of  $\|\nu\|_2$ will be very large by Theorem~\ref{THM: MAIN 2} which leads to a relatively large constant in the approximation error~\eqref{EQ: W ERR}. For each $n$, we evaluate the relative error by 
\begin{equation}\label{EQ: ETA_N}
    \eta = \frac{\|w_n(x, \tau) - P_V w_n(x,\tau)\|}{\|w_n(x,\tau)\|}.
\end{equation}
We summarize the experiment result  in the following Tab~\ref{TAB: ERR}, where the eigenmodes $w_n(x, \tau)$ can be approximated with quite small relative errors by the subspace $V$.
\begin{table}[!htb]
    \centering
    \caption{Relative error of approximation by subspace $V$.}
    \vspace{0.2cm}
    \begin{tabular}{|c || c | c | c | c |}
    \hline
        $n$ &  1 & 2 & 3& 4  \\
        \hline
        $\eta$ & $2.10\times 10^{-16}$ & $5.09\times 10^{-16}$ & $6.91\times 10^{-16}$ & $1.35\times 10^{-16}$  \\
        \hline
        $n$ & 5 & 6 & 7 & 8\\
        \hline
        $\eta$ & $4.75\times 10^{-16}$ & $3.76\times 10^{-16}$& $2.81\times 10^{-15}$ & $3.66\times 10^{-13}$\\
        \hline
    \end{tabular}
    \label{TAB: ERR}
\end{table}

\subsection{Experiment 2}
In this experiment, we take the operator $\cL u = -u_{xx}+ u$ with periodic boundary condition on $[0, 1]$, then the multiplicity of each eigenvalue $\mu_n = n^2+1$ is two except for $n=0$ which corresponds to the constant eigenfunction. In this case, according to the Theorem~\ref{THM: MAIN 3}, we will need two sample solution trajectories with linearly independent coefficients on the time span $t\in [10^{-6} , 1]$:
\begin{equation}
\begin{aligned}
    u_1(x, t) &= \sum_{n=1}^{\infty} \frac{(-1)^n}{n^2} e^{-(n^2\pi^2+1)t} \sin n\pi x,  \\
     u_2(x, t) &= e^{-t} + \sum_{n=1}^{\infty} \frac{(-1)^n}{n^2} e^{-(n^2\pi^2+1)t} \cos n\pi x. 
\end{aligned}
\end{equation}
 Similar to the previous experiment, the solution subspace $V$ is union of the singular vectors from singular value decomposition of $u_1$ and $u_2$ truncated at the threshold of $10^{-12}$ and $\dim(V) = 52$. We validate Theorem~\ref{THM: MAIN 3} with eigenmodes
\begin{equation}
    w_n(x, \tau) = e^{-(n^2 \pi^2 + 1)\tau} e^{in\pi x},\quad 0\le n \le 8
\end{equation}
at time $\tau =0.1$. The relative errors are computed by~\eqref{EQ: ETA_N} and summarized in Tab~\ref{TAB: ERR 2}.
\begin{table}[!htb]
    \centering
    \caption{Relative error of approximation by subspace $V$.}
    \vspace{0.2cm}
    \begin{tabular}{|c || c | c | c | c |}
    \hline
        $n$ &  1 & 2 & 3& 4  \\
        \hline
        $\eta$ & $5.82\times 10^{-16}$ & $8.55\times 10^{-16}$ & $9.28\times 10^{-16}$ & $2.98\times 10^{-16}$  \\
        \hline
        $n$ & 5 & 6 & 7 & 8\\
        \hline
        $\eta$ & $6.56\times 10^{-16}$ & $4.64\times 10^{-16}$& $4.19\times 10^{-15}$ & $5.54\times 10^{-13}$\\
        \hline
    \end{tabular}
    \label{TAB: ERR 2}
\end{table}

\subsection{Experiment 3}\label{SEC: EX 3}
In contrast to one dimensional case, we demonstrate in 2D that the subspace spanned by a single solution trajectory of second order parabolic equation's cannot be used to approximate all solutions well. For simplicity, we take the elliptic operator
\begin{equation}
    \cL u = -\Delta u 
\end{equation}
on a rectangular domain $[0, 1]\times [0, 2^{-1/4}]$ with Dirichlet boundary condition. The eigenvalues are $\lambda_{m, n} = \pi^2(m^2 + \sqrt{2}n^2)\in \pi^2 \bbZ[\sqrt{2}]$, $m\ge 1, n\ge 1$. It can be shown the eigenvalues are simple and grow with rate $\beta = n/d = 1$ which violates the Assumption~\ref{AS: 1}. On the other hand, if 
$m_1^2 + \sqrt{2} n_1^2 <  m_2^2 + \sqrt{2} n_2^2$,
then the difference
\begin{equation}\label{EQ: EXP 3 INEQ}
    \left|     m_1^2 + \sqrt{2} n_1^2 -  (m_2^2 + \sqrt{2} n_2^2) \right| \ge \frac{c}{\sqrt{2}({  m_2^2 + \sqrt{2} n_2^2 })}
\end{equation}
for certain $c > 0$. The proof is found in Lemma~\ref{LEM: APP 1}. That is to say the eigenvalue gap $\mu_{n+1} - \mu_n \ge \frac{c\pi^2}{\sqrt{2} \mu_{n+1}} = \Theta(n^{-1})$, hence satisfies the Assumption~\ref{AS: 2}. 

We take the sample solution on the time span $t\in[10^{-6}, 1]$ in the following expansion form,
\begin{equation}
\begin{aligned}
    u(x, y, t) &= \sum_{m=1}^{\infty} \sum_{n=1}^{\infty}\frac{1}{m^2n^2} e^{-\pi^2(m^2 + \sqrt{2}n^2) t} \sin(m\pi x) \sin(2^{1/4} n\pi y) \\
    &= \sum_{m=1}^{\infty} \frac{1}{m^2} e^{-\pi^2 m^2 t} \sin(m\pi x) \sum_{n=1}^{\infty} \frac{1}{n^2} e^{-\sqrt{2}\pi^2 n^2 t} \sin(2^{1/4} n\pi y).
\end{aligned}
\end{equation}
Similarly, we formulate the data matrix by evaluating the solution uniformly in both space and time, the subspace $V$ consists of the leading left-singular vectors above the singular value threshold $10^{-12}$ and $\dim(V) = 34$. The subspace is validated against the first 8 eigenmodes
\begin{equation}
    w_{m,n} (x, y, \tau)= e^{-\pi^2(m^2 + \sqrt{2}n^2)\tau} \sin(m\pi x)  \sin( 2^{1/4} n \pi y)
\end{equation}
at time $\tau  = 0.1$. The relative errors are summarized in the Tab~\ref{TAB: ERR 3}. It can be seen that the subspace spanned by the sample solution trajectory cannot even approximate the first few eigenmodes well.
\begin{table}[!htb]
    \centering
    \caption{Relative error of approximation by subspace $V$.}
    \vspace{0.2cm}
    \begin{tabular}{|c || c | c | c | c |}
    \hline
        $(m, n)$ &  $(1,1)$ & $(2,1)$ & $(1,2)$& $(2,2)$  \\
        \hline
        $\eta$ & $4.02\times 10^{-14}$ & $5.20\times 10^{-14}$ & $9.14\times 10^{-15}$ & $2.03\times 10^{-12}$  \\
        \hline
        $(m, n)$ & $(3,1)$ & $(1,3)$ & $(3, 2)$ & $(2, 3)$\\
        \hline
        $\eta$ & $5.47\times 10^{-12}$ & $1.84\times 10^{-8}$& $2.93\times 10^{-6}$ & $1.05\times 10^{-4}$\\
        \hline
    \end{tabular}
    \label{TAB: ERR 3}
\end{table}

\subsection{Experiment 4}
In two dimension, we consider the 4th order elliptic operator $\cL = \partial_{x}^{(4)} + \partial_y^{(4)}$ on the rectangular domain $\Omega = [0, 1]\times [0, 2^{-1/8}]$ with boundary conditions: $u = 0$ on $\partial\Omega$, $u_{xx}=0$ on $\{x=0, x=1\}$ and $u_{yy} = 0$ on $\{y=0, y=2^{-1/8}\}$. The eigenvalues are $\lambda_{m,n} = \pi^2 (m^4 + \sqrt{2}n^4)$, $m, n\in\bbZ_{+}$. The eigenvalues satisfies both assumptions~\ref{AS: 1} and~\ref{AS: 2} by deriving an analogue of the inequality~\eqref{EQ: EXP 3 INEQ}.

The sampled solution on the time span $t\in[10^{-6},1]$ is the following form
\begin{equation}
\begin{aligned}
    u(x, y, t) 
    = \sum_{m=1}^{\infty} \frac{1}{m^2} e^{-\pi^2 m^4 t} \sin(m\pi x) \sum_{n=1}^{\infty} \frac{1}{n^2} e^{-\sqrt{2}\pi^2 n^4 t} \sin(2^{1/8} n\pi y).
\end{aligned}
\end{equation}
The subspace $V$ consists of the left-singular vectors above the singular value threshold $10^{-12}$ and $\dim(V) = 23$. We validate the subspace against the first 8 eigenmodes
\begin{equation}\label{EQ: 4TH}
    w_{m,n}(x, y, \tau) = e^{-\pi^2 (m^4 + \sqrt{2}n^4)\tau} \sin(m\pi x) \sin(2^{1/8}n\pi y)
\end{equation}
at $\tau = 0.1$. The relative errors are summarized in the Tab~\ref{TAB: ERR 4}. Unlike the second order case, the eigenmodes~\eqref{EQ: 4TH} are well resolved by $V$.
\begin{table}[!htb]
    \centering
    \caption{Relative error of approximation to eigenmodes by subspace $V$.}
    \vspace{0.2cm}
    \begin{tabular}{|c || c | c | c | c |}
    \hline
        $(m, n)$ &  $(1,1)$ & $(2,1)$ & $(1,2)$& $(2,2)$  \\
        \hline
        $\eta$ & $3.46\times 10^{-15}$ & $3.37\times 10^{-14}$ & $3.31\times 10^{-14}$ & $2.35\times 10^{-14}$  \\
        \hline
        $(m, n)$ & $(3,1)$ & $(3,2)$ & $(1, 3)$ & $(2, 3)$\\
        \hline
        $\eta$ & $3.61\times 10^{-14}$ & $1.62\times 10^{-12}$& $1.26\times 10^{-12}$ & $3.66\times 10^{-11}$\\
        \hline
    \end{tabular}
    \label{TAB: ERR 4}
\end{table}

\subsection{Experiment 5}\label{SEC: EXP 5}
One useful application for data driven model reduction is to predict a PDE solution from limited observed data, e.g., by a few local sensors. Suppose that a small linear space $V$ has been found from a sample solution. One can then approximate a new solution by a linear combination of basis of $V$, where the coefficients can be determined, e.g., least square fitting, by a few measurements of the new solution.   Let us take the subspace $V$ from the experiment in~\ref{EX: 1}. If we observed certain solution at time $\tau$ 
\begin{equation}
    \tilde{u}(x, \tau) = \sum_{n=1}^{\infty} \omega_{n} e^{-\pi^2 n^2 \tau} \sin(\pi n x)
\end{equation}
at locations $z_i\in [0, 1]$, $i=1,2,\dots, 50$ (uniformly distributed), we can use the subspace $V$, whose orthonormal basis is available, to find out the solution by a direct least square fitting at $\{u(z_i, \tau))\}_{i=1}^{50}$. In this experiment, we set 
\begin{equation}
    \omega_n = \begin{cases}
    \text{i.i.d $\cU(-1, 1)$ random variables}, &n=1,2,\dots, 10^3, \\
    0,&\text{otherwise}.
    \end{cases}
\end{equation}
For each value of $\tau$, we sample $10^3$ realizations of $\tilde{u}$ and compute the approximated solutions from $V$.  The recorded average relative errors are shown in Fig~\ref{FIG: INTERP}. When $\tau$ decreases, the constant in the estimate~\eqref{EQ: W ERR} grows very fast, thus we can see the error is quite large for small value of $\tau$.
\begin{figure}[!htb]
    \centering
    \includegraphics[scale=0.75]{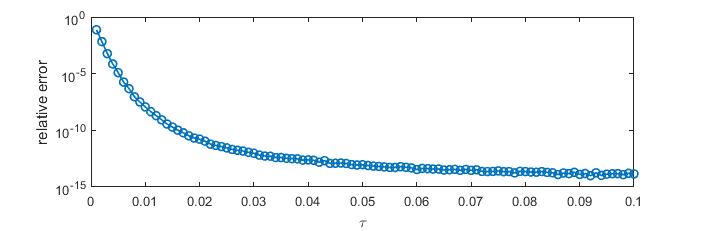}
    \caption{Average relative $L^2$ error for noiseless data with respect to different $\tau$.}
    \label{FIG: INTERP}
\end{figure}
\subsection{Experiment 6}
We use the same setting of Experiment 5 but with noises. Suppose the equally spaced discretization of time $\{t_j\}_{j\ge 1}$ with time step $\Delta t$ varying from $10^{-8}$ to $10^{-3}$. Spatially the discretization $\{x_i\}_{i\ge 1}$ is equally spaced that $\Delta x = 10^{-3}$.  The discretized sample solution data is
\begin{equation}
        \tilde{u}(x_i, t_j) = \sum_{n=1}^{\infty} \frac{(-1)^n}{n^2} e^{-n^2\pi^2 t_j} \sin n \pi x_i + e(x_i, t_j),
\end{equation}
where $e(x_i, t_j)$ is i.i.d uniformly sampled in $[-10^{-3}, 10^{-3}]$.  The averaging is taken on a window size of $S = \lceil 0.1/\Delta t \rceil$, see Sec~\ref{SEC: NOISE}. We perform the same experiment as in Experiment 5 on $\tilde{u}$. The resulting relative errors are recorded in Fig~\ref{FIG: INTERP NOISE}. The random noises in the sampled solution introduce quite large errors in the reconstructed solution compared to the noiseless Experiment 5 since the parameter $\ell\approx 10^{-12}$ in~\eqref{EQ: WEDIN} amplifies the error greatly.

\begin{figure}[!htb]
    \centering
    \includegraphics[scale=0.7]{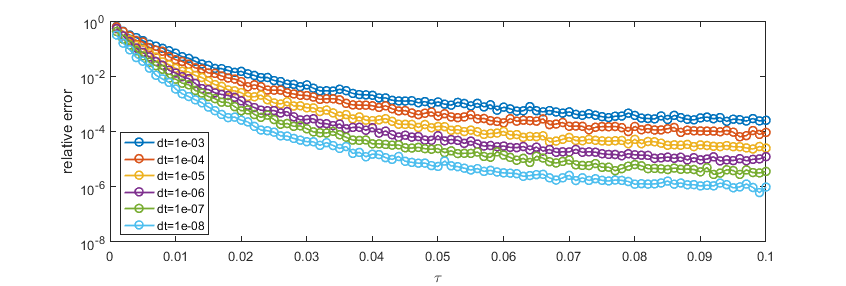}
    \label{FIG: INTERP NOISE}
    \caption{Average relative $L^2$ error for noise data with respect to different $\tau$ and $\Delta t$.}
\end{figure}

\section{Conclusion}
In this work, time evolution equation is used as an example to show if the space spanned by all snapshots of a single (or multiple) solution(s) is as rich as the space spanned by the snapshots of all solutions. It is shown that when smoothing of the differential operator dominates the diversity in spatial dimensions, i.e., the order of the PDE is larger than the space dimension, one can use superposition of snapshots of a single (or multiple) solution(s) to construct an arbitrary solution. In practice, this means that if one can find a linear space a single (or multiple) sampled solution(s) is (are) close to, that space can be used to approximate all solutions. 
Moreover, the learned solution space can be used as a general regularization for PDE learning~\cite{he2022much} which complements the limitations of local matching. 

\section*{Acknowledgment}
H. Zhao is partially supported by NSF grant DMS-2012860.

\appendix
\section{Auxiliary lemmas}

\begin{lemma}\label{LEM: APP 1}
Suppose $m_1, n_1, m_2, n_2\in\bbZ_{+}\cup \{0\}$ and if $m_1^2 + \sqrt{2} n_1^2 < m_2^2 + \sqrt{2} n_2^2$, then 
\begin{equation}
    \left|(m_2^2 + \sqrt{2} n_2^2) - (m_1^2 + \sqrt{2} n_1^2)\right| >  \min\left( \frac{c}{n_2^2}, \frac{c}{\sqrt{2}(m_2^2)}\right).
\end{equation}
where $c\in(0, 1)$ is a constant. In particular, we have 
\begin{equation}
     \left|(m_2^2 + \sqrt{2} n_2^2) - (m_1^2 + \sqrt{2} n_1^2)\right| >  \frac{c}{\sqrt{2} ({m_2^2 + \sqrt{2}n_2^2})}.
\end{equation}
\end{lemma}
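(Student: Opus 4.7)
The strategy is to treat
\[
D \;:=\; (m_2^2 + \sqrt{2}\,n_2^2) - (m_1^2 + \sqrt{2}\,n_1^2) \;=\; a + \sqrt{2}\,b, \qquad a := m_2^2 - m_1^2,\ b := n_2^2 - n_1^2,
\]
with $a,b\in\bbZ$, and to exploit the quadratic irrationality of $\sqrt{2}$ via the factorization $(a+\sqrt{2}\,b)(a-\sqrt{2}\,b) = a^2 - 2b^2\in\bbZ$. The hypothesis gives $D>0$.

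First I would dispose of the degenerate cases. If $b = 0$, then $D = a$ is a positive integer, so $|D|\geq 1$ and the claimed bound is trivial. If $a = 0$, then $D = \sqrt{2}\,b$ with $b\in\bbZ_{>0}$, so $|D|\geq\sqrt{2}$ and again the bound holds. This handling is essential because the main argument divides by $|a-\sqrt{2}\,b|$, which would be delicate if both coefficients vanished.

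The substantive case is $a,b\neq 0$. Since $\sqrt{2}$ is irrational, $a^2 - 2b^2$ is a nonzero integer, hence $|a^2-2b^2|\geq 1$. This gives the standard ``separation from zero'' estimate in $\bbZ[\sqrt{2}]$:
\[
|D| \;=\; |a+\sqrt{2}\,b| \;\geq\; \frac{1}{|a-\sqrt{2}\,b|} \;\geq\; \frac{1}{|a|+\sqrt{2}\,|b|}.
\]
Next I would bound the denominator using the hypothesis. Setting $S := m_2^2 + \sqrt{2}\,n_2^2$, the inequality $m_1^2 + \sqrt{2}\,n_1^2 < S$ gives both $m_1^2 \leq S$ and $\sqrt{2}\,n_1^2 \leq S$, so
\[
|a|+\sqrt{2}\,|b| \;\leq\; (m_1^2+m_2^2)+\sqrt{2}(n_1^2+n_2^2) \;=\; (m_1^2+\sqrt{2}\,n_1^2)+(m_2^2+\sqrt{2}\,n_2^2) \;\leq\; 2S.
\]
Thus $|D|\geq 1/(2S)$, which is precisely the ``in particular'' statement with $c=\sqrt{2}/2$.

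Finally, to obtain the sharper minimum form, I would split on the relative size of $n_2^2$ and $\sqrt{2}\,m_2^2$. If $n_2^2 \geq \sqrt{2}\,m_2^2$, then $S \leq n_2^2/\sqrt{2} + \sqrt{2}\,n_2^2 = 3n_2^2/\sqrt{2}$, so $|D| \geq \sqrt{2}/(6n_2^2)$. If instead $n_2^2 < \sqrt{2}\,m_2^2$, then $S < 3m_2^2$, so $|D| > 1/(6m_2^2) = \sqrt{2}/(6\sqrt{2}\,m_2^2)$. Choosing $c := \sqrt{2}/6 \in (0,1)$ uniformly covers both branches and yields the minimum bound. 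The only real obstacle is the bookkeeping for the degenerate cases; the diophantine step itself is elementary because $\sqrt{2}$ is a quadratic irrational with explicit conjugate.
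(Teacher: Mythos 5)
Your proof is correct. It reaches the same conclusion as the paper's but the organization is genuinely tidier, and the comparison is worth noting. The paper cites Liouville's theorem for $\sqrt{2}$ as a black box (the bound $|q\sqrt{2}-p|>c/q$) and then runs a three-way case split on the signs of $a=m_2^2-m_1^2$ and $b=n_2^2-n_1^2$, applying the cited bound separately in each sign pattern. You instead prove the Diophantine separation from scratch via the conjugate factorization $(a+\sqrt{2}b)(a-\sqrt{2}b)=a^2-2b^2\in\bbZ\setminus\{0\}$, which is exactly the content of Liouville for a quadratic irrational, and then you bound $|a-\sqrt{2}b|\le|a|+\sqrt{2}|b|<2S$ uniformly across all sign patterns using only the ordering hypothesis $m_1^2+\sqrt{2}n_1^2<S$. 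This one-shot bound $|D|>\frac{1}{2S}=\frac{1}{2(m_2^2+\sqrt{2}n_2^2)}$ is precisely the ``in particular'' clause of the lemma with an explicit constant $c=\sqrt{2}/2$ (and your $c=\sqrt{2}/6$ works for the $\min$ form after splitting on $n_2^2\gtrless\sqrt{2}m_2^2$), whereas the paper derives the ``in particular'' form as a corollary of the $\min$ form. What your route buys is self-containedness (no external citation), an explicit constant, and the observation that the sharper bound $1/(2S)$ holds directly; what the paper's route buys is that each case reads off immediately from the quoted approximation theorem without the extra algebra bounding $|a-\sqrt{2}b|$. One small point of care in your write-up: your degenerate cases $a=0$ or $b=0$ conclude ``the claimed bound is trivial,'' which is fine provided at least one of $m_2,n_2$ is nonzero so that the $\min$ is finite and $\le c<1$; this does hold under the hypothesis (since $m_2^2+\sqrt{2}n_2^2>0$), but it deserves a half-sentence.
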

\begin{proof}
We consider 3 cases. (1). $m_2 \ge m_1$ and $n_2 \ge n_1$, since both equal signs cannot hold simultaneously, we obtain the trivial bound
\begin{equation}
     \left|(m_2^2 + \sqrt{2} n_2^2) - (m_1^2 + \sqrt{2} n_1^2)\right| \ge 1.
\end{equation}
(2). $m_1 \ge m_2$, $n_2 > n_1$. 
According to the Liouville's theorem, for any $p, q\in\bbZ_{+}$, there exists a constant $c > 0$ that
\begin{equation}
    |q\sqrt{2} - p| > \frac{c}{q}.
\end{equation}
therefore 
\begin{equation}
\begin{aligned}
\left|m_2^2 + \sqrt{2} n_2^2 - ( m_1^2 + \sqrt{2} n_1^2)\right| &= \left|(m_1^2 - m_2^2) - \sqrt{2} (n_2^2 - n_1^2)\right| \\ &> \frac{c}{|n_2^2 - n_1^2|} \ge \frac{c}{n_2^2}.
\end{aligned}
\end{equation}
(3). $m_1 < m_2$, $n_2 \le n_1$. Use the same argument as above,
\begin{equation}
\begin{aligned}
\left|m_2^2 + \sqrt{2} n_2^2 - ( m_1^2 + \sqrt{2} n_1^2)\right| &= \frac{1}{\sqrt{2}}\left| 2(n_1^2 - n_2^2)  -\sqrt{2}(m_2^2 - m_1^2)\right| \\ &> \frac{c}{\sqrt{2}|m_2^2 - m_1^2|} \ge \frac{c}{\sqrt{2}(m_2^2)}.
\end{aligned}
\end{equation}
By taking the minimum of the three cases, we obtain the desired result.
\end{proof}

\section{Proof of Lemma~\ref{LEM: EST}}\label{AP: LEM}
\begin{proof}
Let $N(x)$ be the counting function for the eigenvalues $\{\mu_k\}_{k\ge 1}$ that 
\begin{equation}
    N(x) = \begin{cases}
    0,\quad  0 \le x < \mu_1, \\
    k, \quad \lambda_k \le x < \lambda_{k+1}.
    \end{cases}
\end{equation}
Then from the estimate in assumption \textbf{A1}, we find 
\begin{equation}\label{EQ: ERR}
    0 \le  M^{-1/\beta} x^{1/\beta} + \delta(x) - N(x)\le 1,
\end{equation}
where $\delta(x) = o(x^{(1-\sigma)/\beta})$ as $x\to \infty$. Then use Riemann-Stieljes integral,
\begin{equation}
\begin{aligned}
        &\log \left( \prod_{j\ge 1} \left(1 + \frac{\mu_n}{\mu_j}\right) \right) = \sum_{j\ge 1} \log \left( 1 + \frac{\mu_n}{\mu_j}\right) \\&= \int_{\mu_1^{-}}^{\infty} \log \left( 1 + \frac{\mu_n}{x}\right) d N(x) = \mu_n \int_{\mu_1}^{\infty} \frac{N(x) dx}{x(x+\mu_n)}
\end{aligned}
\end{equation}
The error bound of~\eqref{EQ: ERR} implies there exists a constant $C > 0$ that
\begin{equation}
    \left|  \mu_n \int_{\mu_1}^{\infty} \frac{N(x) dx}{x(x+\mu_n)} - M^{-1/\beta} \mu_n \int_{\mu_1}^{\infty} \frac{x^{1/\beta}dx}{u(u+\mu_n)}\right|\le C \mu_n \int_{\mu_1}^{\infty} \frac{x^{(1-\sigma)/\beta} dx}{x(x+\mu_n)} .
\end{equation}
The integral on right-hand side is
\begin{equation}\label{EQ: C}
\begin{aligned}
        C \mu_n \int_{\mu_1}^{\infty} \frac{x^{(1-\sigma)/\beta} dx}{x(x+\mu_n)} &= C\mu_n^{(1-\sigma)/\beta} \int_{\mu_1/\mu_n}^{\infty} \frac{d y}{y^{1 - (1-\sigma)/\beta}(y+1)}\\
    &=\begin{cases}
        C\mu_n^{(1-\sigma)/\beta} \left( \int_{0}^{\infty} \frac{d y}{y^{1 - (1-\sigma)/\beta}(y+1)} +o(1) \right),&\quad 0 < \sigma < 1 \\
        C \log\frac{\mu_n+\mu_1}{\mu_1} &\quad \sigma= 1
    \end{cases} \\
    &= o(\mu_n^{1/\beta}).
\end{aligned}
\end{equation}
By a similar argument, we derive
\begin{equation}\label{EQ: M}
    M^{-1/\beta} \mu_n \int_{\mu_1}^{\infty} \frac{x^{1/\beta}dx}{x(x+\mu_n)} = M^{-1/\beta} \mu_n^{1/\beta} \left( \int_{0}^{\infty} \frac{d y}{y^{1 - 1/\beta}(y+1)} +o(1) \right).
\end{equation}
Then we absorb~\eqref{EQ: C} into~\eqref{EQ: M} and immediately obtain the first product in~\eqref{EQ: PROD} is
\begin{equation}
\begin{aligned}
    \prod_{j\ge 1} \left(1 + \frac{\mu_n}{\mu_j}\right) =\exp \left(  M^{-1/\beta} \mu_n^{1/\beta}  (\zeta_{0, \beta} + o(1)) \right),\quad n\to\infty.
\end{aligned}
\end{equation}
where $\zeta_{a, b}$ denotes the following integral, 
\begin{equation}
    \zeta_{a, b} := \int_0^{\infty} \frac{dy}{y^{1 - a/b} (y+1)},\quad 0\le a < b.
\end{equation}
Now we estimate the second product. 
\begin{equation}\label{EQ: PROD 2}
\begin{aligned}
        \log \left|\prod_{j\neq n} \left(1 - \frac{\mu_n}{\mu_j}\right)\right| &= \sum_{j < n}  \log \left( \frac{\mu_n}{\mu_j} - 1\right) + \sum_{j>n} \log \left(1 - \frac{\mu_n}{\mu_j}\right) \\
        &= \int_{\mu_1^{-}}^{\mu_{n-1}} \log \left(\frac{\mu_n}{x} - 1\right) d N_n(x) + \int_{\mu_{n+1}^{-}}^{\infty} \log \left( 1 - \frac{\mu_n}{x}\right) dN_n(x).
\end{aligned}
\end{equation}
where $N_n(x)$ is the counting function without the point at $x = \mu_n$ that
\begin{equation}
    N_n(x) = \begin{cases}
    N(x),\quad &x < \mu_n \\
    N(x) - 1, \quad &x \ge \mu_n.
    \end{cases}
\end{equation}
Therefore the previous estimate is modified to
\begin{equation}
    0\le  M^{-1/\beta} x^{1/\beta} + \delta(x) - N_n(x) \le 2
\end{equation}
and use integration by parts, the integrals of~\eqref{EQ: PROD 2} equals to
\begin{equation}\label{EQ: PROD 2 EST}
\begin{aligned}
    &\left[ N_n(x) \log \left(\frac{\mu_n}{x} - 1\right)\right]\bigg|_{\mu_1^{-}}^{\mu_{n-1}} + \left[N_n(x) \log\left(1 - \frac{\mu_n}{x}\right)\right]\bigg|_{\mu_{n+1}^{-}}^{\infty} \\
    & + \mu_n \int_{\mu_1}^{\mu_{n-1}} \frac{N_n(x) dx}{x(\mu_n-x)} + \mu_n \int_{\mu_{n+1}}^{\infty} \frac{N_n(x) dx}{x(\mu_n - x)} dx 
\end{aligned}
\end{equation}
For the boundary terms, it is simple to see 
\begin{equation}
    \lim_{x\to\infty} N_n(x) \log(1 - \frac{\mu_n}{x}) = 0,
\end{equation}
therefore the boundary terms become
\begin{equation}
\begin{aligned}
    &(n-1)\left[ \log \left(\frac{\mu_n}{\mu_{n-1}} - 1\right) - \log\left(1 - \frac{\mu_n}{\mu_{n+1}}\right)\right] \\&= (n-1)\left[\log\left(\frac{\mu_{n} - \mu_{n-1}}{\mu_{n+1} - \mu_n} \right)+ \log\left(\frac{\mu_{n+1}}{\mu_{n-1}}\right)\right].
\end{aligned}
\end{equation}
Using the assumption \textbf{A2}, 
\begin{equation}
\begin{aligned}
        \frac{\mu_n - \mu_{n-1}}{\mu_{n+1} - \mu_n} &= M \frac{n^{\beta} - (n-1)^{\beta} + o(n^{\beta - \sigma})}{\mu_{n+1} - \mu_n} \le \theta^{-1}M( \beta n^{s + \beta-1} + o(n^{s + \beta - \sigma}) ), \\
        \frac{\mu_{n+1}}{\mu_{n-1}} &= \frac{M (n+1)^{\beta}(1 + o((n+1)^{-\sigma}))}{M (n-1)^{\beta}(1 + o((n-1)^{-\sigma}))} = 1 + o(n^{-\sigma}).
\end{aligned}
\end{equation}
Therefore the boundary terms are bounded by 
\begin{equation}\label{EQ: BD}
    (n-1) ((s+\beta-\sigma)\log n + O(1)) = M^{-1/\beta} \frac{s+\beta-\sigma}{\beta}\mu_n^{1/\beta} \log \mu_n (1 + o(1)).
\end{equation}
 The integral terms in~\eqref{EQ: PROD 2 EST} can be estimated by
\begin{equation}
\begin{aligned}
    &\left| \mu_n  \int_{\mu_1}^{\mu_{n-1}} \frac{N_n(x) dx }{x(\mu_n - x)} -     \mu_n M^{-1/\beta}  \int_{\mu_1}^{\mu_{n-1}} \frac{x^{1/\beta} dx }{x(\mu_n - x)} \right| \le C \mu_n \int_{\mu_1}^{\mu_{n-1}} \frac{x^{(1-\sigma)/\beta}dx}{x(\mu_n - x)}  \\
    &\left| \mu_n \int_{\mu_{n+1}}^{\infty} \frac{N_n(x) dx}{x(\mu_n - x)} - \mu_n M^{-1/\beta}  \int_{\mu_{n+1}}^{\infty} \frac{x^{1/\beta} dx}{x(\mu_n - x)}  \right| \le C \mu_n \int_{\mu_{n+1}}^{\infty} \frac{x^{(1-\sigma)/\beta}dx}{x(x-\mu_n)} 
\end{aligned}
\end{equation}
The right-hand sides are bounded by 
\begin{equation}\label{EQ: REM 1}
\begin{aligned}
    C \mu_n \int_{\mu_1}^{\mu_{n-1}} \frac{x^{(1-\sigma)/\beta}dx}{x(\mu_n - x)} &= C \mu_n^{(1-\sigma)/\beta}\int_{\mu_1/\mu_n}^{\mu_{n-1}/\mu_n} \frac{dy}{y^{1-(1-\sigma)/\beta} (1-y)} \\&\le C_1 \mu_n^{(1-\sigma)/\beta}\left( \log \left(\frac{\mu_n}{\mu_n - \mu_{n-1}}\right) + O(1)\right) \\
    &\le C_1 \mu_n^{(1-\sigma)/\beta}\left( \log {\mu_n} + O(1)\right)
\end{aligned}
\end{equation}
and 
\begin{equation}\label{EQ: REM 2}
\begin{aligned}
    C \mu_n \int_{\mu_{n+1}}^{\infty} \frac{x^{(1-\sigma)/\beta}dx}{x(x- \mu_n)} &= C \mu_n^{(1-\sigma)/\beta}\int_{\mu_{n+1}/\mu_n}^{\infty} \frac{dy}{y^{1-(1-\sigma)/\beta} (y-1)} \\&=C_1 \mu_n^{(1-\sigma)/\beta}\left( \log\left(\frac{\mu_{n+1}-\mu_n}{\mu_n}\right) + O(1) \right) \\
    &=C_1\mu_n^{(1-\sigma)/\beta}  O(1).
\end{aligned}
\end{equation}
where $C_1$ is a positive constant independent of $n$. At last, we estimate the following integral
\begin{equation}\label{EQ: PV}
\begin{aligned}
        & \mu_n M^{-1/\beta}  \int_{\mu_1}^{\mu_{n-1}} \frac{x^{1/\beta} dx }{x(\mu_n - x)} +  \mu_n M^{-1/\beta}  \int_{\mu_{n+1}}^{\infty} \frac{x^{1/\beta} dx}{x(\mu_n - x)}  \\&=   \mu_n^{1/\beta} M^{-1/\beta}\left( \int_{\mu_1/\mu_n}^{\mu_{n-1}/\mu_n} \frac{dy}{y^{1-1/\beta}(1-y)} + \int_{\mu_{n+1}/\mu_n}^{\infty} \frac{dy}{y^{1-1/\beta}(1-y)} \right)
\end{aligned}
\end{equation}
In order to provide an estimate for~\eqref{EQ: PV}, we consider two cases. The first case is $1 - \frac{\mu_{n-1}}{\mu_n} \le \frac{\mu_{n+1}}{\mu_n} - 1$, then the integral 
\begin{equation}
\begin{aligned}
        &\int_{\mu_1/\mu_n}^{\mu_{n-1}/\mu_n} \frac{dy}{y^{1-1/\beta}(1-y)} + \int_{\mu_{n+1}/\mu_n}^{\infty} \frac{dy}{y^{1-1/\beta}(1-y)} \\
        &=     \int_{\mu_1 / \mu_n}^{\mu_{n-1}/\mu_n} \frac{dy}{y^{1-1/\beta}(1-y)} + \int_{2 -\mu_{n-1}/\mu_n }^{\infty} \frac{dy}{y^{1-1/\beta}(1-y)} - \int_{2 -\mu_{n-1}/\mu_n }^{\mu_{n+1}/\mu_n} \frac{dy}{y^{1-1/\beta}(1-y)} \\
        &=  \text{p.v.}\int_{0}^{\infty} \frac{dy}{y^{1-1/\beta}(1-y)} + o(1) - \int_{2 -\mu_{n-1}/\mu_n }^{\mu_{n+1}/\mu_n} \frac{dy}{y^{1-1/\beta}(1-y)} \\
        &\ge  \text{p.v.}\int_{0}^{\infty} \frac{dy}{y^{1-1/\beta}(1-y)} + o(1),
\end{aligned}
\end{equation}
 The second case is $1 - \frac{\mu_{n-1}}{\mu_n} > \frac{\mu_{n+1}}{\mu_n} - 1$, then 
\begin{equation}\label{EQ: SECOND}
    \begin{aligned}
  &\int_{\mu_1/\mu_n}^{\mu_{n-1}/\mu_n} \frac{dy}{y^{1-1/\beta}(1-y)} + \int_{\mu_{n+1}/\mu_n}^{\infty} \frac{dy}{y^{1-1/\beta}(1-y)} \\
        &=     \int_{\mu_1 / \mu_n}^{2 - \mu_{n+1}/\mu_n} \frac{dy}{y^{1-1/\beta}(1-y)} + \int_{\mu_{n+1}/\mu_n }^{\infty} \frac{dy}{y^{1-1/\beta}(1-y)} - \int^{2 -\mu_{n+1}/\mu_n }_{\mu_{n-1}/\mu_n} \frac{dy}{y^{1-1/\beta}(1-y)} \\  
        &\ge \text{p.v.}\int_{0}^{\infty} \frac{dy}{y^{1-1/\beta}(1-y)} + o(1) - C_2 \left|\log \frac{\mu_{n+1} - \mu_n }{\mu_{n} - \mu_{n-1}}\right| \\
        &\ge -C_3 (\log \mu_n  + C_4),
    \end{aligned}
\end{equation}
where $C_2, C_3, C_4$ are positive constants independent of $n$. Combine the previous estimates~\eqref{EQ: BD},~\eqref{EQ: REM 1},~\eqref{EQ: REM 2} and~\eqref{EQ: SECOND}, we obtain the following inequality
\begin{equation}
\begin{aligned}
  \log \left|\prod_{j\neq n} \left(1 - \frac{\mu_n}{\mu_j}\right)\right| 
 \ge -K_0 \mu_n^{1/\beta} M^{-1/\beta} (\log \mu_n + K_1 )
\end{aligned}
\end{equation}
for certain $K_0, K_1 > 0$.
\end{proof}

\bibliographystyle{plain}
\bibliography{main}
\end{document}